\documentclass[a4paper,12pt]{amsart}

\usepackage[all]{xy}
\usepackage{graphics}
\usepackage{color}
\usepackage[T1]{fontenc}

\usepackage[upright]{fourier}
\usepackage{kerkis}
\usepackage{eucal}
\usepackage{parskip}
\newcommand{\B}[1]{{\mathbf #1}}
\newcommand{\C}[1]{{\mathcal #1}}
\newcommand{\F}[1]{{\mathfrak #1}}

\usepackage[colorlinks]{hyperref}
\usepackage{showlabels}

\newtheorem{theorem}[equation]{Theorem}
\newtheorem{corollary}[equation]{Corollary}
\newtheorem{lemma}[equation]{Lemma}
\newtheorem{proposition}[equation]{Proposition}
\theoremstyle{definition}
\newtheorem{example}[equation]{Example}
\theoremstyle{remark}

\newtheorem{remark}[equation]{Remark}

\numberwithin{equation}{section}
\numberwithin{figure}{section}
\numberwithin{table}{section}


\newcommand\OP{\operatorname}
\newcommand\Diff{\OP{Diff}}
\newcommand\Homeo{\OP{Homeo}}
\newcommand\Hom{\OP{Hom}}

\newcommand\PSl{\OP{PSl}}

\title[$\int_x^{hx}g^*\alpha - \alpha$]{
\boldmath $\displaystyle\int\limits_{x}^{hx}g^*\alpha - \alpha$}
\author{\'Swiatos\l aw R. Gal}
\address{\'S.G. --- Uniwersytet Wroc\l awski \& Uniwersit\"at Wien}
\email{sgal@math.uni.wroc.pl}
\author{Jarek K\k edra}
\address{J.K. --- University of Aberdeen \& Uniwersytet Szczeci\'nski}
\email{kedra@abdn.ac.uk}

\begin{document}

\begin{abstract}
Let $X$ be a path-connected topological space admitting a universal cover.
Let $\F a$ be a degree one cohomology class on~$X$. We define and study
a two-cocycle on a group acting on $X$ by homeomorphisms
preserving the class $\F a$.

We apply the cocycle to the study of the distortion in the group
of homeomorphisms preserving the class $\F a$.
In particular, we introduce a local rotation number
of a homeomorphism and prove that a homeomorphism with non-constant
local rotation number is undistorted. We also use the cocycle to
investigate group actions on~$X$.  For example, we show that if an
action preserves a Borel probability measure on $X$ then the cocycle
is cohomologically trivial.
\end{abstract}

\maketitle

\section{Introduction and the statement of the results}
\label{S:intro}

Let $X$ be a path-connected topological space admitting a universal
cover. Let $\Homeo(X,\F a)$ denote the group of homeomorphisms
of $X$ preserving a cohomology class $\F a\in H^1(X;\B A)$, where
$\B A$ is a trivial coefficient system. In the present paper,
we study distortion in $\Homeo(X,\F a)$ and properties of group
acions on $X$ by homeomorphisms preserving the class $\F a$.

The main tool is a two-cocycle on the group
$\Homeo(X,\F a)$ defined by the following formula
$$
\F G_{x,\alpha}(g,h):=\int_\gamma g^*\alpha - \alpha,
$$
where $x\in X$ is a reference point, $\gamma$ is a
path from $x$ to $hx$, and $\alpha$ is a singular cocycle
representing the class $\F a\in H^1(X,\B A)$. The
expression $\int_\gamma\sigma$ denotes the natural
pairing between a chain $\gamma$ and a cochain $\sigma$.
We shall frequently omit one or both subscripts when 
it does not lead to a confusion.

The results obtained in the paper can be divided into two parts. 
The first (Section \ref{S:distortion}) is about distortion. 
In the second part (Sections 
\ref{S:vanish}--\ref{S:nonvanish})
we study the cohomology class of $\F G$ and properties
of group actions on $X$ depending on either vanishing or
non-vanishing of the cohomology class of $\F G$. 

Throughout the paper we consider groups of homeomorphisms
equip\-ped either with compact-open or discrete topology.
The latter case is marked with the superscript $\delta$.
By $G_0$ we denote the connected component of the
identity of $G$.
Let us now discuss the main results.

\subsection{Distortion in groups}

Let $\Gamma$ be a finitely generated group.  Define the word norm
associated with fixed set of generators $S$ to be
$$
|g|:=\min\{k\in \B N\,|\,g=s_1\ldots s_k,\,s_i\in S\}.
$$
The {\bf translation length} 
of an element $g\in \Gamma$ is defined to be
$$
\tau(g):=\lim_{n\to \infty}\frac{|g^n|}{n}.
$$
(Note that by the subadditivity property of the length, the limit exists.) 
An element $g\in \Gamma$ is called {\bf undistorted} if
its translation length is positive and this property
does not depend on the choice of generators. 
If $G$ is a general (not necessarily finitely generated)
group then $g\in G$ is called {\bf undistorted} if it
is undistorted in every finitely generated subgroup
of $G$. Notice that distortion in a subgroup implies 
distortion in an ambient group.

It is well known that
certain lattices in semisimple Lie groups contain distorted
elements due to a result of Lubotzky, Mozes, and Raghunathan
\cite{MR1828742}. On the other hand, the distortion in
groups of diffeomorphisms of closed manifolds is rare
as shown, for example,  by Franks and Handel \cite{MR2219247}, 
Gambaudo and Ghys \cite{MR2104597}, or Polterovich \cite{MR2003i:53126}. 
This provides restrictions on possible actions of such lattices.

The papers cited above are concerned with the distortion either
in volume preserving or in Hamiltonian diffeomorphisms.
It follows from our results, however, that many elements are undistorted
in groups of homeomorphisms of manifolds of dimension at least two
and with nontrivial first real cohomology. Essentially, this is as much as
one gets for such manifolds. In contrast, Calegari and Freedman 
\cite[Theorem C]{MR2207794} proved that all homeomorphisms of the 
sphere $\B S^n$ are distorted in $\Homeo(\B S^n)$.

The results presented in this section are consequences of a more general
Theorem \ref{T:q-m}. The proof of the
following theorem is presented in Section \ref{SS:proof_distortion}.

\begin{theorem}\label{T:distortion}
Let $X$ be compact and let $g\in G\subseteq \Homeo(X,\F a)$ 
where $\F a\in H^1(X;\B R)$ is represented by a one-cocycle $\alpha$. 
Suppose that $g$ has two fixed points $x,y\in X$ and let $\gamma$ 
be a path from $x$ to $y$. If 
$$
\int_\gamma g^*\alpha-\alpha \neq 0
$$
then $g$ is undistorted in $G$.
\end{theorem}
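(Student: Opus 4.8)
The result is a special case of the general Theorem~\ref{T:q-m}; here is a direct argument. For $w\in\Homeo(X,\F a)$ the class $[w^*\alpha-\alpha]=w^*\F a-\F a$ vanishes, so $w^*\alpha-\alpha=dF_w$ for some $0$-cochain (a function) $F_w\colon X\to\B R$, determined up to an additive constant since $X$ is path-connected, and
\[
\F G_{x,\alpha}(w,h)=\int_{x}^{hx}(w^*\alpha-\alpha)=F_w(hx)-F_w(x);
\]
in particular the number in the hypothesis is $\int_\gamma(g^*\alpha-\alpha)=F_g(y)-F_g(x)$. To prove $g$ undistorted it is enough to bound $\tau_S(g)$ below by a positive constant for every finitely generated subgroup $\Gamma=\langle S\rangle\le G$ containing $g$. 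The mechanism is that the $F_w$ are subadditive along words in $S$, while $F_{g^n}(y)-F_{g^n}(x)$ grows linearly in $n$ because $x$ and $y$ are fixed by $g$.

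Since $X$ is compact, each $F_w$ can be taken to be a bounded function; for $s\in S\cup S^{-1}$ set $M:=\max_s\bigl(\sup_X F_s-\inf_X F_s\bigr)$ (the oscillation is insensitive to the additive ambiguity). From $dF_{ab}=(ab)^*\alpha-\alpha=b^*(a^*\alpha-\alpha)+(b^*\alpha-\alpha)=d(F_a\circ b+F_b)$ one reads off the relation $F_{ab}=F_a\circ b+F_b+\mathrm{const}$. Iterating it along a word $w=s_1\cdots s_\ell$ with $\ell=|w|_S$ expresses $F_w$, up to an additive constant, as a sum of $\ell$ functions each of the form $F_{s_k}\circ(\text{homeomorphism})$, so for all $p,q\in X$
\[
\bigl|F_w(p)-F_w(q)\bigr|\le\sum_{k=1}^{\ell}\Bigl(\sup_X F_{s_k}-\inf_X F_{s_k}\Bigr)\le M\,|w|_S .
\]

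Applying the same relation to powers of $g$ gives $F_{g^n}=\sum_{k=0}^{n-1}F_g\circ g^k+\mathrm{const}$, hence, using $g^kx=x$ and $g^ky=y$,
\[
F_{g^n}(y)-F_{g^n}(x)=\sum_{k=0}^{n-1}\bigl(F_g(y)-F_g(x)\bigr)=n\bigl(F_g(y)-F_g(x)\bigr)=n\!\int_\gamma(g^*\alpha-\alpha).
\]
Combining this with the previous estimate applied to $w=g^n$ and $(p,q)=(y,x)$ yields $n\,\bigl|\int_\gamma(g^*\alpha-\alpha)\bigr|\le M\,|g^n|_S$. Since the left-hand side is a nonzero multiple of $n$ by hypothesis, this forces $M>0$ and $\tau_S(g)=\lim_n|g^n|_S/n\ge\bigl|\int_\gamma(g^*\alpha-\alpha)\bigr|/M>0$. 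As $\Gamma$ and $S$ were arbitrary, $g$ is undistorted in $G$.

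The only point that is not routine bookkeeping is the claim that $w^*\alpha-\alpha$ admits a \emph{bounded} primitive on $X$. This is exactly where compactness of $X$ enters, and it is the input to be imported from the constructions of the preceding sections, where the cocycle $\F G$ (equivalently, this family of primitives) is set up. With it in place, the only two further ingredients are that $\F a$ is $w$-invariant (so the primitives $F_w$ exist) and that $x,y$ are fixed points of $g$ (so $F_{g^n}(y)-F_{g^n}(x)$ grows linearly).
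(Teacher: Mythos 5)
Your argument is correct and is essentially the paper's own proof with Proposition~\ref{P:q-m} and Theorem~\ref{T:q-m} unwound into one computation: your $F_w$ is the cocycle $\F K_\alpha(w)$, your oscillation bound $|F_w(p)-F_w(q)|\le M\,|w|_S$ is the paper's Lipschitz estimate $\|w\|_\alpha\le C|w|$, and the exact linearity of $F_{g^n}(y)-F_{g^n}(x)$ in $n$ is precisely the statement that the quasimorphism $\F q$ has zero defect on $\langle g\rangle$ because $x$ and $y$ are fixed points. You also correctly isolate the one genuinely non-routine input, the existence of a bounded (continuous) primitive of $w^*\alpha-\alpha$ on the compact space $X$, which is the paper's Lemma~\ref{L:continuous}.
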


Two fixed points $x,y$ of a map $g\colon X\to X$
are called {\bf Nielsen equivalent} if there exists a path
$\gamma$ from $x$ to $y$ such that $\gamma$ and $g\gamma$
are homotopic modulo the endpoints. The hypothesis
of the above theorem implies that the homeomorphism
$g$ has two fixed points which are Nielsen nonequivalent
in a stronger sense. Namely, the cycle $g\gamma - \gamma$
is homologically nontrivial.

\begin{example}\label{E:surface_nielsen}
Let $G\subset \Homeo(\Sigma)$ be a group of homeomorphisms
of a closed oriented surface $\Sigma$ acting trivially
on the first cohomology of~$\Sigma$. Suppose that  $g\in G$ has
two fixed points $x,y\in \Sigma$ such that
$g\gamma - \gamma$ is a homologically nontrivial loop,
where $\gamma$ is a path from $x$ to $y$. 
Then $g$ is undistorted in $G$. Indeed, there exists a
cohomology class $\F a\in H^1(\Sigma;\B Z)$ evaluating
nontrivially on $g\gamma - \gamma$.
\hfill $\diamondsuit$
\end{example}

The cocycle $\F G$ depends on a reference point and
it is unbounded in general. In Section \ref{SS:bounded} we define
a local rotation number of a homeomorphism with
respect to a point at which the cocycle $\F G$ is
bounded. 

\begin{theorem}\label{T:rotation}
Let $\F a\in H^1(X;\B Z)$ and let $g\in \Homeo(X,\F a)$
and assume that $X$ is compact.
Let $x$ and $y$ be points such that the cocycles
$\F G_x$ and $\F G_{y}$ are bounded on the cyclic subgroup
generated by $g$. If the local rotation numbers of $g$
at $x$ and $y$ are distinct then $g$ is undistorted
in $\Homeo(X,\F a)$.
\end{theorem}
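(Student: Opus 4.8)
The plan is to convert ``distinct local rotation numbers'' into a linear lower bound for $|g^n|$ by working on a $\B Z$-cover of $X$ carrying a continuous height function. Since $\F a\in H^1(X;\B Z)$ it is classified by a map $f\colon X\to\B S^1$; let $p\colon\tilde X\to X$ be the pull-back of the covering $\B R\to\B S^1$ along $f$, let $T$ be the generating deck transformation, and let $\tilde\psi\colon\tilde X\to\B R$ be the (continuous) projection to $\B R$, so $\tilde\psi\circ T=\tilde\psi+1$. Every $h\in\Homeo(X,\F a)$ satisfies $f\circ h\simeq f$, hence lifts to a homeomorphism $\tilde h$ of $\tilde X$ commuting with $T$; fix such a lift $\tilde g$ of $g$. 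A primitive of $g^*\alpha-\alpha$ is then the function on $X$ obtained by descending $z\mapsto\tilde\psi(\tilde g\tilde z)-\tilde\psi(\tilde z)$ (this is $T$-invariant, hence independent of the chosen lift $\tilde z$). Unwinding the definition of $\F G$ one finds, for a lift $\tilde x$ of $x$,
$$
\F G_x(g^n,g^m)=a_{n+m}-a_n-a_m,\qquad a_n:=\tilde\psi(\tilde g^{\,n}\tilde x)-\tilde\psi(\tilde x),
$$
and the analogous formula with $b_n=\tilde\psi(\tilde g^{\,n}\tilde y)-\tilde\psi(\tilde y)$ for the point $y$. The hypothesis that $\F G_x,\F G_y$ are bounded on $\langle g\rangle$ says precisely that $D_x:=\sup_{n,m}|\F G_x(g^n,g^m)|$ and $D_y$ are finite, i.e.\ $n\mapsto a_n$ and $n\mapsto b_n$ are quasimorphisms on $\B Z$; their homogenisations are the local rotation numbers, $\rho_x(g)=\lim_n a_n/n$ and $\rho_y(g)=\lim_n b_n/n$, with $|a_n-n\rho_x(g)|\le D_x$ and $|b_n-n\rho_y(g)|\le D_y$ for all $n$. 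Put $\Delta:=|\rho_x(g)-\rho_y(g)|>0$.

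Now fix a finitely generated $\Gamma\subseteq\Homeo(X,\F a)$ with $g\in\Gamma$ and a finite symmetric generating set $S$, and choose lifts $\tilde s$ ($s\in S$) with $\widetilde{s^{-1}}=(\tilde s)^{-1}$. As above, each $z\mapsto\tilde\psi(\tilde s\tilde z)-\tilde\psi(\tilde z)$ descends to a continuous, hence bounded, function $c_s$ on the compact space $X$; set $C:=\max_{s\in S}\sup_X|c_s|$. Telescoping along any word $h=s_1\cdots s_k$ (and using that the $c_s$ are lift-independent) gives $|\tilde\psi(\tilde s_1\cdots\tilde s_k\tilde z)-\tilde\psi(\tilde z)|\le kC$ for every $z$ and every lift $\tilde z$. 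Apply this to a geodesic word for $g^n$, of length $k=|g^n|$: the resulting lift $\tilde s_1\cdots\tilde s_{|g^n|}$ of $g^n$ equals $T^{m_n}\tilde g^{\,n}$ for some $m_n\in\B Z$, so evaluating at $\tilde x$ and at $\tilde y$ yields $|a_n+m_n|\le C|g^n|$ and $|b_n+m_n|\le C|g^n|$, and therefore $|a_n-b_n|\le 2C|g^n|$ (the \emph{same} $m_n$ appears in both, since the same word-lift of $g^n$ is used).

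Combining the two estimates, $2C|g^n|\ge|a_n-b_n|\ge n\Delta-D_x-D_y$, so $|g^n|\ge(n\Delta-D_x-D_y)/2C$ and the translation length satisfies $\tau_S(g)\ge\Delta/2C>0$. As $\Gamma$ and $S$ were arbitrary, $g$ is undistorted in $\Homeo(X,\F a)$. This is exactly the content of Theorem \ref{T:q-m} specialised to the pair of reference points $x,y$: for a path $\gamma$ from $x$ to $y$ one has $\int_\gamma(g^n)^*\alpha-\alpha=b_n-a_n$, which grows linearly in $n$ precisely when the local rotation numbers differ --- the analogue, for powers of $g$, of the hypothesis $\int_\gamma g^*\alpha-\alpha\neq0$ of Theorem \ref{T:distortion}.

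The main obstacle is not this estimate but the preliminaries that make it meaningful. First, one needs a genuinely continuous primitive $\tilde\psi$ (so that $\tilde\psi$, and hence the $c_s$, are bounded on compact $X$); this is where integrality of $\F a$ is essential, through its classifying map to $\B S^1$, and is the reason this theorem is stated for $H^1(X;\B Z)$ rather than $H^1(X;\B R)$. Second, one must check that $\rho_x(g)$ is well defined once $\tilde g$ is fixed and that $\rho_x(g)-\rho_y(g)$ does not depend on that choice — replacing $\tilde g$ by $T^k\tilde g$ shifts both $\rho_x(g)$ and $\rho_y(g)$ by $k$, so the difference, and in particular the hypothesis that it is nonzero, is intrinsic. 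Finally, the bookkeeping identifying the word-lift of $g^n$ with $T^{m_n}\tilde g^{\,n}$, and verifying that $m_n$ is independent of the reference point, must be done carefully, as it is precisely the cancellation of $m_n$ that turns the two one-sided bounds into the two-sided bound on $|a_n-b_n|$.
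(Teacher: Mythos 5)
Your argument is correct and is essentially the paper's own: your inequality $|a_n-b_n|\le 2C|g^n|$ is the Lipschitz bound $\|g^n\|_\alpha\le C\,|g^n|$ underlying Theorem~\ref{T:q-m}, and your lower bound $|a_n-b_n|\ge n|\rho_x-\rho_y|-D_x-D_y$ is the paper's computation $\F q(g^n)=n\bigl(\int_{\square_1}\alpha+(r_x(g)-r_y(g))\bigr)+O(1)$, with your lift-bookkeeping integer $m_n$ playing the role of the integral period $\int_{\square_1}\alpha$ (and your $\rho$'s agreeing with the paper's local rotation numbers up to a sign convention and an integer, which is immaterial since the hypothesis is a statement in $\B R/\B Z$). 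The only difference is cosmetic: you re-derive Theorem~\ref{T:q-m} by telescoping the height function on the $\B Z$-cover instead of citing it.
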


\begin{example}\label{E:annulus}
Let $X=\B S^1\times [0,1]$ be the closed annulus and let
$h\colon X\to X$ be a homeomorphism preserving the orientation
and the components of the boundary. If the topological rotation numbers
of $h$ restricted to the boundary circles are distinct then
$h$ is undistorted in the group of orientation preserving
homeomorphisms of $X$. 
\hfill
$\diamondsuit$
\end{example}

We discuss more applications of Theorem \ref{T:rotation}
in Section~\ref{SS:top_rot}.

\subsection{Vanishing properties and dynamics}
Let us discuss the conditions under which the cohomology 
class of $\F G$ is trivial.
This has a dynamical flavour as nonvanishing of $[\F G]$ is an obstruction
to the existence of certain invariant objects. The proof of the
next result is presented in Section \ref{SS:proof_subset}.

\begin{theorem}\label{T:subset}
Let $i_L\colon L\to X$ be the inclusion of a path-connected subset
such that $i_L^*\F a=0\in H^1(L;\B A)$. If an action
$\psi\colon G\to\Homeo(X,\F a)$ preserves the subset\/ $L$
then $\psi^*[\F G]=0$.
\end{theorem}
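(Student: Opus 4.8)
The plan is to pull $\F G$ back to $G$ along $\psi$ and to exhibit the resulting $2$-cocycle as a coboundary by writing down an explicit $1$-cochain on $G$ whose coboundary it is.

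First I would use the fact that the cohomology class $[\F G]$ does not depend on the choice of reference point nor on the singular cocycle $\alpha$ representing $\F a$, and compute with a convenient choice: take the reference point $x$ to lie in $L$. This is legitimate because $L$ is nonempty and, since $\psi$ preserves $L$, the point $\psi(h)x$ lies in $L$ for every $h\in G$. Next I would invoke the hypothesis $i_L^*\F a=0$ to pick a $0$-cochain $f$ on $L$ — that is, a function $f\colon L\to\B A$ — with $i_L^*\alpha=\delta f$ in the singular cochain complex of $L$.

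The computation I anticipate runs as follows. Fix $g,h\in G$; I suppress $\psi$ and write $g,h$ for the homeomorphisms $\psi(g),\psi(h)$. In $\F G_{x,\alpha}(g,h)=\int_\gamma g^*\alpha-\alpha$ the value is independent of the path $\gamma$ from $x$ to $hx$ because $g^*\alpha-\alpha$ is exact (as $g$ preserves $\F a$), so I may choose $\gamma$ to be a $1$-chain supported in $L$. Then $\int_\gamma\alpha=\int_\gamma i_L^*\alpha=\int_\gamma\delta f=f(hx)-f(x)$ since $\partial\gamma=hx-x$, while, because $g(L)=L$, one has $i_L^*(g^*\alpha)=(g|_L)^*(i_L^*\alpha)=(g|_L)^*\delta f=\delta\bigl(f\circ(g|_L)\bigr)$, and integrating over $\gamma\subset L$ gives $\int_\gamma g^*\alpha=f(ghx)-f(gx)$. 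Hence
$$
(\psi^*\F G)(g,h)=f(ghx)-f(gx)-f(hx)+f(x).
$$
I would conclude by setting $b(g):=f(\psi(g)x)-f(x)$ and checking the one-line identity $b(gh)-b(g)-b(h)=f(ghx)-f(gx)-f(hx)+f(x)$, so that $\psi^*\F G=\pm\,\delta b$ is a coboundary and therefore $\psi^*[\F G]=0$.

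The points that need care are the reduction to a connecting path supported in $L$ and the compatibility of the restriction $i_L^*$ with pullback by $g$: the former rests on $g$ preserving the class $\F a$ (so that $g^*\alpha-\alpha$ is exact and the integral is path-independent), and the latter on $g$ preserving the subset $L$ (so that $g\circ i_L=i_L\circ(g|_L)$). Once these are in place the coboundary identity is immediate, so I do not expect a genuine obstacle beyond this bookkeeping.
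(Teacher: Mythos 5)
Your proof is correct and follows essentially the same route as the paper: choose the reference point and the connecting path inside $L$, trivialize $i_L^*\alpha$ by a function $f\colon L\to\B A$, and observe that $\F G(g,h)=f(ghx)-f(gx)-f(hx)+f(x)$ is the coboundary of $g\mapsto f(gx)-f(x)$. The only cosmetic difference is that you spell out the naturality step $i_L^*(g^*\alpha)=(g|_L)^*(i_L^*\alpha)$, which the paper leaves implicit.
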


One can think of this result as a form of ergodicity in
which invariant subsets have necessarily complicated
topology. In particular, if $\psi^*[\F G]$ is nontrivial then
the action preserves no path-connected and simply-connected
subsets.

\begin{theorem}\label{T:measure}
Suppose that $X$ is compact.
If the action $\psi\colon G\to \Homeo(X,\F a)$ preserves
a Borel probability measure on $X$ then the class
$\psi^*[\F G]\in H^2(BG^{\delta};\B R)$ is trivial.
\end{theorem}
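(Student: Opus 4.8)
The plan is to exhibit an explicit primitive of the pulled-back group $2$-cocycle $\psi^*\F G$ obtained by averaging a base-point correction over the invariant measure. Fix a reference point $x_0\in X$ and a singular $1$-cocycle $\alpha$ representing the image of $\F a$ in $H^1(X;\B R)$; since the class of $\F G$ in $H^2(BG^\delta;\B R)$ is independent of this choice, I may take $\alpha$ to be a continuous (e.g.\ de Rham, when $X$ is a manifold) representative. Abbreviate $gx:=\psi(g)(x)$. Because $\psi(g)$ preserves $\F a$, the cocycle $\psi(g)^*\alpha-\alpha$ represents $\psi(g)^*\F a-\F a=0$, hence equals $\delta u_g$ for a real function $u_g\colon X\to\B R$, unique once we normalise $u_g(x_0)=0$. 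As $X$ is compact and $\alpha$ continuous, $u_g$ is bounded, and by definition of the pairing one has $\psi^*\F G_{x_0}(g,h)=\int_\gamma(\psi(g)^*\alpha-\alpha)=u_g(hx_0)$ for any path $\gamma$ from $x_0$ to $hx_0$.

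Next I would record the composition law. From $\psi(gh)^*\alpha-\alpha=\psi(h)^*\bigl(\psi(g)^*\alpha-\alpha\bigr)+\bigl(\psi(h)^*\alpha-\alpha\bigr)$ one gets $\delta u_{gh}=\delta\bigl(u_g\circ\psi(h)+u_h\bigr)$, so $u_{gh}=u_g\circ\psi(h)+u_h+c(g,h)$ for a constant $c(g,h)$; evaluating at $x_0$ and using the normalisation gives $c(g,h)=-u_g(hx_0)=-\psi^*\F G_{x_0}(g,h)$.

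The heart of the argument is to integrate this identity against the preserved probability measure $\mu$. Define $F\colon G\to\B R$ by $F(g):=\int_X u_g\,d\mu$, which is finite since $u_g$ is bounded. Invariance of $\mu$ under $\psi(h)$ gives $\int_X u_g\circ\psi(h)\,d\mu=\int_X u_g\,d\mu$, so integrating $u_{gh}=u_g\circ\psi(h)+u_h+c(g,h)$ yields $F(gh)=F(g)+F(h)-\psi^*\F G_{x_0}(g,h)$, that is, $\psi^*\F G_{x_0}=\delta F$ as an inhomogeneous $2$-cocycle on $G$. Hence $\psi^*[\F G]=0$ in $H^2(BG^\delta;\B R)$.

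The cocycle manipulations are routine; the point needing care is the passage to the integral — ensuring $u_g$ is a genuine bounded $\mu$-measurable function (this is precisely where compactness of $X$ and the freedom to choose a continuous representative $\alpha$ enter) and that measure invariance really delivers $\int_X u_g\circ\psi(h)\,d\mu=\int_X u_g\,d\mu$. Equivalently, the whole proof can be phrased through base-point dependence of $\F G$: one has $\F G_x-\F G_y=\delta v_{x,y}$ with $v_{x,y}(k)=\int_x^y(k^*\alpha-\alpha)$, so that $\psi^*\F G_{x_0}-\psi^*\F G_y=\delta(\psi^*v_{x_0,y})$; then $\int_X\psi^*\F G_y\,d\mu(y)=0$ by invariance, whence $\psi^*\F G_{x_0}=\delta\bigl(\int_X\psi^*v_{x_0,y}\,d\mu(y)\bigr)$. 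This is the form I would write down if the base-point independence of $[\F G]$ has already been established earlier in the paper.
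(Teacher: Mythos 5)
Your proposal is correct and takes essentially the same route as the paper: the paper lifts the one-cocycle $\F K_\alpha$ to genuine continuous functions (normalising by $\int_X\widetilde{\F K}_\alpha(h)\,\mu=0$ rather than at the basepoint, which yields the same primitive $\F b(g)=-\widetilde{\F K}_\alpha(g)(x)=\int_X u_g\,d\mu$) and uses measure invariance to turn the cocycle identity into an exact coboundary relation. The one point you flag as needing care --- that $u_g$ can be taken continuous, hence bounded and integrable on compact $X$ --- is exactly the content of the paper's Lemma \ref{L:continuous}, which is invoked at the start of its proof.
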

The proof is presented in Section \ref{SS:proof_measure}.

A topological group $G$ is called {\bf amenable} if
a continuous affine action of $G$ on a non-empty
compact convex subset of a locally convex topological
vector space has a fixed point \cite[Theorem G.1.7]{MR2415834}.

The space of  Borel probability measures on a compact
space is compact as a subset of the dual of the Banach space
of continuous functions on $X$ equipped with weak-* topology.
Consequently, every action of an amenable group on a compact
space preserves a Borel probability measure. This proves the
following result.

\begin{corollary}\label{C:amenable}
Let $X$ be compact.  If\/ $\psi\colon G\to
\Homeo(X,\F a)$ is an action of a topological amenable group
then $\psi^*[\F G]=0$ in $H^2(BG^\delta;\B R)$.
\qed
\end{corollary}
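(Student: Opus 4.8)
The plan is to use amenability to manufacture a $G$-invariant Borel probability measure on $X$ and then quote Theorem~\ref{T:measure}. First I would recall the definition of amenability adopted here: every continuous affine action of $G$ on a non-empty compact convex subset of a locally convex topological vector space has a fixed point. The candidate convex set is the space $M(X)$ of Borel probability measures on $X$, realized inside the dual $C(X)^*$ of the Banach space of continuous functions on $X$ with the weak-$*$ topology. That ambient space is locally convex, $M(X)$ is visibly convex, and, as recorded in the paragraph preceding the corollary, $M(X)$ is weak-$*$ compact (Banach--Alaoglu, together with the observation that the conditions $\mu\geq 0$ and $\mu(X)=1$ cut out a weak-$*$ closed subset of the unit ball of $C(X)^*$).

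Next I would verify that $\psi$ induces a continuous affine action of $G$ on $M(X)$ by pushforward of measures: $g$ sends $\mu$ to the functional $f\mapsto \int_X f\circ g^{-1}\,d\mu$ on $C(X)$. This is affine in $\mu$ for each fixed $g$, and continuous in $g$ because $\Homeo(X)$ with the compact-open topology acts continuously on $C(X)$, hence on its dual equipped with the weak-$*$ topology, while $\psi$ itself is continuous. Applying the fixed-point property of amenable groups to this action produces $\mu\in M(X)$ with $g_*\mu=\mu$ for all $g\in G$; that is, a $G$-invariant Borel probability measure on the compact space $X$.

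With the invariant measure in hand, Theorem~\ref{T:measure} applies directly: the action $\psi\colon G\to\Homeo(X,\F a)$ preserves the Borel probability measure $\mu$, so $\psi^*[\F G]$ vanishes in $H^2(BG^\delta;\B R)$. Note that the amenability hypothesis is used at the level of the topological group $G$, whereas the conclusion concerns the classifying space of the underlying discrete group, exactly as in the statement of Theorem~\ref{T:measure}.

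The only step that calls for genuine care is the continuity of the induced action on $M(X)$: one has to be precise about which topology on $G$ is being used, and to check that $\psi$ being continuous into $\Homeo(X,\F a)$ with the compact-open topology really does give a continuous action on $C(X)^*$ in the weak-$*$ topology, so that the fixed-point theorem is applicable. Everything else is a formal combination of the remark preceding the corollary with Theorem~\ref{T:measure}, which is why the corollary requires no separate argument.
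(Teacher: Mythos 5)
Your argument is exactly the one the paper uses: the remark preceding the corollary produces a $G$-invariant Borel probability measure via the fixed-point characterisation of amenability applied to the weak-$*$ compact convex set of probability measures in $C(X)^*$, and the conclusion then follows from Theorem~\ref{T:measure}. Your write-up merely fills in the routine details (Banach--Alaoglu, affineness and continuity of the pushforward action) that the paper leaves implicit.
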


\begin{example}
Let $X=U(1)$ and consider the natural action of $U(1)$ on itself.
Then $[\F G]$ is nontrivial in $H^2(BU(1)^{\delta};\B Z)$ according
to Theorem~\ref{T:def} and trivial in $H^2(BU(1)^{\delta};\B R)$
by the above corollary.
\hfill $\diamondsuit$
\end{example}

\subsection{Nonvanishing properties}
Let $\OP{ev}\colon G\to X$ denote the evaluation at the reference
point $x\in X$ associated with an action of a topological group
$G$ on $X$. Our main result about the nonvanishing
of the class $[\F G]$ is concerned with the homology 
of the evaluation map (see Section~\ref{SS:proof_ev} for the
proof).

\begin{theorem}\label{T:ev}
Let\/ $\psi\colon G\to \Homeo(X,{\F a})$ be an action of a
connected topological group. Assume that
the homomorphism
$
H^2(BG;\B A)\to H^2(BG^\delta;\B A)
$
induced by the identity on $G$ is injective.
Then the class $\psi^*[\F G]\in H^2(BG^\delta;\B A)$ is nonzero
if and only if\/
$\OP{ev}^*({\F a})\in H^1(G;\B A)$ is nontrivial.
\end{theorem}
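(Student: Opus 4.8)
The plan is to recognise $\F G$ as the cocycle of a natural central extension of $G$ and then use the injectivity hypothesis to pass from the vanishing of $\psi^*[\F G]$ in $H^2(BG^\delta;\B A)$ to a splitting of that extension. To set things up, fix a singular $1$-cocycle $\alpha$ representing $\F a$. Since each $g\in\Homeo(X,\F a)$ fixes $\F a$, the cocycle $g^*\alpha-\alpha$ is exact, so there is a function $f_g\colon X\to\B A$ with $\delta f_g=g^*\alpha-\alpha$, which we normalise by $f_g(x)=0$ at the reference point $x$. An elementary computation gives $f_{gh}=f_g\circ h+f_h-f_g(hx)$ and
\[
(\psi^*\F G)(g,h)=\int_x^{hx}\bigl(g^*\alpha-\alpha\bigr)=f_g(hx)=f_g\bigl(\OP{ev}(h)\bigr),
\]
where here $hx$ and $\OP{ev}(h)$ both mean $\psi(h)x$.

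I would first treat the implication ``$\OP{ev}^*\F a=0\Rightarrow\psi^*[\F G]=0$'', which needs no hypothesis on $G$. If $\OP{ev}^*\F a=0$, then the $1$-cocycle $\OP{ev}^*\alpha$ on $G$ is a coboundary, $\OP{ev}^*\alpha=\delta v$ for some $v\colon G\to\B A$. Since $G$ is connected, choose for each $h$ a path $\beta_h$ in $G$ from the identity to $h$; then $\OP{ev}\circ\beta_h$ runs from $x$ to $hx$, while $g\cdot(\OP{ev}\circ\beta_h)=\OP{ev}\circ(L_g\beta_h)$ runs from $gx$ to $ghx$, where $L_g$ is left translation in $G$. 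Pulling $\alpha$ back along $\OP{ev}$ and using $\OP{ev}^*\alpha=\delta v$ gives
\[
(\psi^*\F G)(g,h)=\int_{g\cdot(\OP{ev}\circ\beta_h)}\alpha-\int_{\OP{ev}\circ\beta_h}\alpha=v(gh)-v(g)-v(h)+v(e),
\]
which is a coboundary in the bar complex of $G^\delta$; hence $\psi^*[\F G]=0$.

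For the converse, let $p\colon\widehat X\to X$ be the regular covering corresponding to $\ker(\F a\colon\pi_1(X)\to\B A)$; replacing $\B A$ by the image of $\F a$ if necessary, we may assume $\F a$ is onto, so $\widehat X$ is connected with deck group $\B A$ and $p_*\pi_1(\widehat X)=\ker\F a$. Fix $\widehat f\colon\widehat X\to\B A$ with $\delta\widehat f=p^*\alpha$, equivariant for the deck action and vanishing over $x$. Each element of $\Homeo(X,\F a)$ lifts to $\widehat X$, and the group $\widehat G$ of all lifts of elements of $G$ is a central extension $1\to\B A\to\widehat G\to G\to1$. Let $\bar g$ be the unique lift of $g$ with $\widehat f\circ\bar g=\widehat f+f_g\circ p$; then $g\mapsto\bar g$ is a section with $\bar e=\mathrm{id}$, and, using $f_{gh}=f_g\circ h+f_h-f_g(hx)$, one checks that the $2$-cocycle of the extension determined by this section is $(g,h)\mapsto f_g(hx)=(\psi^*\F G)(g,h)$. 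Since for $g$ near $e$ the lift $\bar g$ is the one near $\mathrm{id}$, the section is continuous near the identity, so $\widehat G\to G$ is a topological central extension; consequently $\psi^*[\F G]$ is the image of a class $\varepsilon\in H^2(BG;\B A)$ under $H^2(BG;\B A)\to H^2(BG^\delta;\B A)$. Now if $\psi^*[\F G]=0$, injectivity of this map forces $\varepsilon=0$, so the extension splits as topological groups; equivalently the $G$-action on $X$ lifts to an action on $\widehat X$. The orbit map $g\mapsto g\cdot\widehat x$ then lifts $\OP{ev}$ through $p$, so by the lifting criterion $\OP{ev}_*\pi_1(G)\subseteq\ker\F a$, i.e.\ $\OP{ev}^*\F a=0$ in $H^1(G;\B A)$ (using $H^1(G;\B A)=\Hom(\pi_1(G),\B A)$, valid since $G$ is connected).

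The step I expect to be the main obstacle is the third paragraph: verifying that $\widehat G\to G$ is genuinely a \emph{topological} central extension, so that its class lives in $H^2(BG;\B A)$ and the injectivity hypothesis can be invoked, and matching its cocycle with $\F G$ on the nose. Some additional care is needed when $\B A=\B R$, where $\widehat X\to X$ is an infinite covering with discrete deck group $\B R$ rather than a finite-sheeted one, and for the routine verification that a lift of a group action is again a group action.
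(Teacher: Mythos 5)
Your forward implication ($\OP{ev}^*\F a=0\Rightarrow\psi^*[\F G]=0$) is correct and complete: choosing the path from $x$ to $hx$ to be $\OP{ev}\circ\beta_h$ and pulling $\alpha$ back along $\OP{ev}$ cleanly exhibits $\psi^*\F G$ as a bar coboundary, and this is more elementary than what the paper does for that direction. Your identification of $\psi^*\F G$ with the extension cocycle of $\B A\to\widehat G\to G$ also correctly reproduces the equality $[\F G]=\F E_{\F a}$ of the paper's Theorem \ref{T:def}. The gap sits exactly where you flagged it, and it is a genuine one rather than a routine verification: you never establish that $\widehat G\to G$ is a \emph{topological} central extension (i.e.\ that the normalised section $g\mapsto\bar g$ is continuous near $e$, equivalently that $\widehat G\to G$ is a covering of topological groups), nor that such an extension carries a class $\varepsilon\in H^2(BG;\B A)$ with $\beta^*\varepsilon=\psi^*[\F G]$. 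The first point is not automatic for an arbitrary connected topological group acting continuously on an arbitrary $X$ admitting a universal cover --- producing the local section requires some lifting argument and mild local hypotheses --- and the second point is precisely the step that lets the injectivity hypothesis bite, so it cannot be asserted without proof. Without these, the chain ``$\psi^*[\F G]=0\Rightarrow\varepsilon=0\Rightarrow$ the extension splits'' has no starting point. A secondary issue: ``replacing $\B A$ by the image of $\F a$'' is not a legitimate reduction, since the injectivity hypothesis and the class $\psi^*[\F G]$ both live in $\B A$-coefficients and vanishing with one coefficient group neither implies nor is implied by vanishing with the other; work instead with the possibly disconnected covering $X_{\F a}=\widetilde X\times_{\pi_1(X)}\B A$ with deck group $\B A$, as the paper does.

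The paper's proof avoids the topological-extension problem altogether. Having identified $\psi^*[\F G]$ with the transgression $d_2(\F a)$ in the Borel fibration $X\to X_{G^\delta}\to BG^\delta$ (Theorem \ref{T:def}), it uses the morphism of Borel constructions over $\beta\colon BG^\delta\to BG$ --- which is functorial and continuous with no local sections needed --- to write $\psi^*[\F G]=\beta^*(d_2\F a)$ with $d_2$ now computed over $BG$; the evaluation map then induces a morphism from $X\to X_G\to BG$ to $G\to EG\to BG$, and since $EG$ is contractible and $G$ is connected, $d_2\F a\neq 0$ over $BG$ if and only if $\OP{ev}^*\F a\neq 0$. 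Your argument becomes sound if you take $\varepsilon$ to be this transgression class (which is your intended extension class, but defined without any continuity of sections) and replace the splitting argument by the fact that the transgression $H^1(G;\B A)\to H^2(BG;\B A)$ of the path--loop fibration is an isomorphism, under which $d_2\F a$ over $BG$ corresponds to $\OP{ev}^*\F a$.
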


There are cases in which the hypothesis of Theorem \ref{T:ev}
is satisfied. For example, let $X$ be a connected topological manifold with
trivial ends. It is a result of McDuff \cite{MR569248}
that the identity homomorphism induces an isomorphism
$H^*(BG;\B A)\to H^*(BG^{\delta};\B A)$,
where $G$ is a group of homeomorphisms of $X$
containing the component of the identity.

\begin{corollary}\label{C:homeo}
Let $X$ be a connected topological manifold with trivial ends.
Consider the natural action of\/ $G=\Homeo(X)_0$,
the connected component of the identity of the group
of homeomorphisms of\/ $X$.
Then the class $[\F G] \in H^2(BG^{\delta};\B A)$ is
nonzero if and only if\/ $\OP{ev}^*(\F a)\neq 0$.
\qed
\end{corollary}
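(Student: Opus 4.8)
The plan is to deduce the corollary directly from Theorem~\ref{T:ev} applied to the group $G=\Homeo(X)_0$, once two elementary points are settled: that the natural action really lands in $\Homeo(X,\F a)$, and that the injectivity hypothesis of Theorem~\ref{T:ev} holds for this particular $G$.

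First I would check that $G=\Homeo(X)_0$ acts trivially on $H^1(X;\B A)$. Since $X$ is a topological manifold, $\Homeo(X)$ with the compact-open topology is locally contractible, so its identity component $G$ is open and path-connected; a path from the identity to $h$ in $G$ is an isotopy, homotopic self-maps induce the same map on cohomology, and therefore $h^*\F a=\F a$ for every $h\in G$. Consequently the inclusion $G\hookrightarrow\Homeo(X)$ corestricts to an action $\psi\colon G\to\Homeo(X,\F a)$, the cocycle $\F G$ is defined on $G$ by the formula of the introduction, and $\psi^*[\F G]=[\F G]$ is precisely the class appearing in the statement. (If one wishes to avoid the appeal to local contractibility, one may simply replace $G$ throughout by the path-component of the identity, without changing the statement.)

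Secondly, $G$ is a group of homeomorphisms of $X$ containing the identity component, and $X$ is a connected manifold with trivial ends, so McDuff's theorem \cite{MR569248} applies: the identity $G^\delta\to G$ induces an isomorphism $H^*(BG;\B A)\to H^*(BG^\delta;\B A)$, which in particular is injective in degree two. As $G$ is also a connected topological group, all hypotheses of Theorem~\ref{T:ev} are met, and that theorem gives at once that $[\F G]=\psi^*[\F G]$ is nonzero in $H^2(BG^\delta;\B A)$ if and only if $\OP{ev}^*(\F a)\in H^1(G;\B A)$ is nontrivial, which is the assertion. I do not expect any genuine obstacle: the mathematical substance lies entirely in Theorem~\ref{T:ev} and in McDuff's theorem, and the only slightly delicate point is the use of local contractibility in the first step, which as noted can be bypassed.
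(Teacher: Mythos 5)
Your proposal is correct and follows the same route as the paper: the corollary is stated with an immediate \qed precisely because it is Theorem~\ref{T:ev} combined with McDuff's theorem (which the paper, in its remark on this corollary, extends from the full homeomorphism group to any subgroup containing the identity component via the fact that the homotopy fibre of $\beta$ depends only on a neighbourhood of the identity). Your additional verification that $\Homeo(X)_0$ acts trivially on $H^1(X;\B A)$, hence lands in $\Homeo(X,\F a)$, is a point the paper leaves implicit but is handled correctly.
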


Another instance where the hypothesis of Theorem \ref{T:ev}
is satisfied is when $G$ is a connected perfect group with countable
fundamental group.  The following result is proven in
Section \ref{SS:perfect}.

\begin{corollary}\label{C:perfect}
Let $\psi\colon G\to \Homeo(X,{\F a})$ be an action of
a connected perfect group with countable fundamental group.
Then $\psi^*[\F G]\neq 0$ if and only if  $\OP{ev}^*({\F a})\neq 0$.
\end{corollary}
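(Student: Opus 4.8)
The plan is to invoke Theorem \ref{T:ev}, so the entire task reduces to verifying its hypothesis: that for a connected perfect group $G$ with countable fundamental group, the natural map $H^2(BG;\B A)\to H^2(BG^\delta;\B A)$ is injective. Once this is established, Theorem \ref{T:ev} gives the conclusion verbatim, since $\psi^*[\F G]\neq 0$ iff $\OP{ev}^*(\F a)\neq 0$.

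First I would set up the comparison between $BG$ and $BG^\delta$ via the universal cover. Let $\widetilde G\to G$ be the universal covering homomorphism, with kernel $\pi_1(G)$, a countable group sitting centrally in $\widetilde G$. The group $\widetilde G$ is connected and simply connected. The key input is that a connected, simply connected topological group has the rational (indeed, with $\B A$-coefficients for $\B A$ a $\B Q$-vector space, or more carefully with appropriate coefficients) cohomology of its classifying space concentrated so that the map from the discrete version behaves well in low degrees; concretely, for a perfect group $G$ the bar-resolution comparison combined with the fact that $H_1(G^\delta;\B Z)=0$ (perfectness) and the Milnor-type argument controls $H_2$. I would run the Lyndon--Hochschild--Serre spectral sequence, or rather the comparison of the two fibrations
$$
B\pi_1(G)\to BG\to B\widetilde G,\qquad B\pi_1(G)^\delta\to BG^\delta\to B(\widetilde G)^\delta,
$$
reducing the injectivity for $G$ to a statement about $\widetilde G$ together with a low-degree computation for the countable abelian group $\pi_1(G)$, for which $H^2(B\pi_1(G);\B A)\to H^2(B\pi_1(G)^\delta;\B A)$ is injective (countability ensures $B\pi_1(G)$ is, up to the relevant degree, a product/colimit of circles, on which the statement is classical — cf.\ the Example with $X=U(1)$).

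The step I expect to be the main obstacle is controlling $H^2$ of the classifying space of the \emph{simply connected} group $\widetilde G$ and transferring it across the discrete/topological comparison. Here perfectness of $G$ is essential: it forces $H_1(G^\delta;\B Z)=0$, and then a Hopf-formula / universal-coefficients argument identifies $H^2(BG^\delta;\B A)$ with $\Hom(H_2(G^\delta;\B Z),\B A)$ with no $\Hom(H_1,\B A)$ correction term, so that the only thing to check is that a class in $H^2(BG;\B A)$ mapping to zero discretely already evaluates to zero on all topological $2$-cycles — which follows because for a connected group every singular $2$-cycle can be pushed into a discrete subcomplex up to boundary, using that $BG$ is built from $G^{\times n}$ and a standard simplicial-approximation/smoothing argument on the nerve. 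I would spell out this last point as the technical heart of the proof; the rest is assembling the spectral-sequence bookkeeping and citing McDuff-type or Milnor-type results as available. Finally, with the hypothesis of Theorem \ref{T:ev} verified, the corollary follows immediately.
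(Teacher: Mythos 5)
Your reduction is the right one and matches the paper: everything hinges on verifying the hypothesis of Theorem \ref{T:ev}, namely that $\beta^*\colon H^2(BG;\B A)\to H^2(BG^\delta;\B A)$ is injective. But the verification you sketch has a genuine gap at its declared ``technical heart''. The claim that for a connected group every singular $2$-cycle in $BG$ can be pushed into a discrete subcomplex up to boundary (so that $H_2(BG^\delta;\B Z)\to H_2(BG;\B Z)$ is surjective) is false in general: for $G=U(1)$ one has $H_2(BU(1);\B Z)=\B Z$, yet the image of $H_2(BU(1)^\delta;\B Z)$ is trivial --- equivalently, by Corollary \ref{C:amenable} and Theorem \ref{T:def}, the generator of $H^2(BU(1);\B R)$ dies in $H^2(BU(1)^\delta;\B R)$. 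So no simplicial-approximation argument can work without invoking the hypotheses, and in your sketch perfectness only enters on the discrete side (to kill the $\OP{Ext}(H_1,\B A)$ term), which is not where the difficulty lies. Relatedly, your use of countability is misplaced: $\pi_1(G)$ is already a discrete group, so $B\pi_1(G)=B\pi_1(G)^\delta$ and the statement you want to prove about it is vacuous (also, the fibration should read $B\pi_1(G)\to B\widetilde G\to BG$, not $B\pi_1(G)\to BG\to B\widetilde G$).

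The paper's route, following Milnor, supplies exactly the two inputs your sketch is missing. First, countability of $\pi_1(G)$ is used to show that the universal cover $\widetilde G$ is \emph{perfect}: its abelianisation is a path-connected group which is also a quotient of the countable group $\pi_1(G)$, hence trivial. Second, one uses that the homotopy fibre $\C F$ of $BG^\delta\to BG$ depends only on the local structure of $G$, so it coincides with the homotopy fibre for $\widetilde G$; the five-term exact sequence for $\C F\to B\widetilde G^\delta\to B\widetilde G$ then gives $H_1(\C F;\B Z)=0$, squeezed between $H_2(B\widetilde G;\B Z)=0$ (simple connectivity of $\widetilde G$) and $H_1(B\widetilde G^\delta;\B Z)=0$ (perfectness of $\widetilde G$). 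Feeding $H^1(\C F;\B A)=0$ into the spectral sequence for $\C F\to BG^\delta\to BG$ yields the injectivity of $\beta^*$ on $H^2$. To repair your argument you would need to replace the approximation claim by this (or an equivalent) mechanism; as written, the step that is supposed to carry the proof does not hold.
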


Let $K$ be a connected compact Lie group.  
Then the evaluation map of
the action of $K$ on itself induces an isomorphism
$\OP{ev}^*\colon H^1(K;\B A)\to H^1(K;\B A)$
for any coefficients $\B A$.  If there exist a nontrivial 
homomorphism $\pi_1(K)\to \B A$
then $H^1(K;\B A)=\Hom(\pi_1(K);\B A)$ is nontrivial.
Let $G$ be a connected algebraic semisimple Lie group
and let $G=KAN$ be its Iwasawa decomposition.
Then $G$ acts on $K=G/AN$ and the action extends the natural
action of\/ $K$ on itself. Since $G$ is homotopy equivalent
to $K$ we obtain that the action induces an isomorphism               
$\OP{ev}^*\colon H^1(G;\B A)\to H^1(K;\B A)$.
Applying this observation and Corollary
\ref{C:perfect} gives the following application.

\begin{corollary}
Let $\psi\colon G\to \Homeo_0(K)$ be the action of a connected 
algebraic semisimple Lie group on its maximal compact subgroup 
$K\subset G$. Then the  class $\psi^*[\F G_{\alpha}]\in H^2(BG^{\delta};\B A)$ 
associated with the action is nontrivial for any nonzero element 
$[\alpha]\in H^1(K;\B A)$.
\qed
\end{corollary}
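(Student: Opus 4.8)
The plan is to obtain the statement directly from Corollary~\ref{C:perfect}, applied to $\psi$ viewed as an action $\psi\colon G\to\Homeo_0(K)\subseteq\Homeo(K,\F a)$ with $\F a=[\alpha]$; the inclusion is legitimate because every homeomorphism isotopic to the identity acts trivially on $H^1(K;\B A)$, hence preserves $\F a$. To apply Corollary~\ref{C:perfect} one checks its three hypotheses on $G$. Connectedness is assumed. Perfectness is the standard fact that a connected semisimple Lie group coincides with its own commutator subgroup (and it suffices here that $G$ be algebraic). Countability of $\pi_1(G)$ follows from the Iwasawa decomposition $G\cong K\times A\times N$ with $A$ and $N$ contractible: it gives $\pi_1(G)\cong\pi_1(K)$, which is finitely generated because $K$ is a compact manifold.

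It remains to compute $\OP{ev}^*(\F a)$. With $e\in K$ as the reference point, the evaluation map $\OP{ev}\colon G\to K=G/AN$ sends $g$ to $gAN$; writing $g=kan$ in the Iwasawa decomposition this is $kAN$, so $\OP{ev}$ is nothing but the projection of $K\times A\times N$ onto its first factor, and in particular its restriction to $K\subseteq G$ is the identity of $K$. Since the inclusion $K\hookrightarrow G$ is a homotopy equivalence (again by the Iwasawa decomposition), the restriction map $H^1(G;\B A)\to H^1(K;\B A)$ is an isomorphism; being a one-sided inverse of $\OP{ev}^*\colon H^1(K;\B A)\to H^1(G;\B A)$, it forces $\OP{ev}^*$ to be an isomorphism as well. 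This is exactly the observation recorded just before the statement. Consequently $\OP{ev}^*(\F a)\neq 0$ whenever $[\alpha]\neq 0$, and Corollary~\ref{C:perfect} gives $\psi^*[\F G_\alpha]\neq 0$ in $H^2(BG^{\delta};\B A)$.

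The mathematical substance is already assembled in the paragraph preceding the statement, so the proof is mainly a verification of the hypotheses of Corollary~\ref{C:perfect} together with bookkeeping. The only point that warrants a moment's care is perfectness of $G$: one needs $[G,G]=G$ as abstract groups, not merely the easy assertion $\overline{[G,G]}=G$ that follows from $[\F g,\F g]=\F g$. This is classical for connected semisimple Lie groups, and it is the mildest of obstacles.
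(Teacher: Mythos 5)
Your proposal is correct and follows essentially the same route the paper takes: the authors also deduce the corollary from Corollary~\ref{C:perfect} together with the observation (recorded in the paragraph preceding the statement) that the Iwasawa decomposition $G=KAN$ makes the evaluation map $G\to K=G/AN$ a homotopy equivalence extending the action of $K$ on itself, so that $\OP{ev}^*$ is an isomorphism on $H^1$. Your explicit verification of perfectness and of the countability of $\pi_1(G)\cong\pi_1(K)$ merely fills in hypotheses the paper leaves implicit.
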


\begin{example}
Let $\Gamma \subset G$ be a torsion free uniform lattice in
a connected non-compact simple Lie group of Hermitian type.
Then $B \Gamma = \Gamma\backslash G/K$ is a closed K\"ahler
manifold and the homomorphism
$H^2(BG^{\delta};\B R)\to H^2(B\Gamma;\B R)$
is injective.
In this case the class of the cocycle $\F G$ is nontrivial
in $H^2(B\Gamma;\B R)$
with respect to the action of $\Gamma$ on $K$ (as in the above
corollary).  The evaluation homomorphism
is necessarily trivial since $\Gamma$ is discrete.
\hfill $\diamondsuit$
\end{example}

\begin{example}\label{E:maps}
Let $X=\OP{Map}_1(\B S^1,\B S^1)$ be the space of continuous
degree one self-maps of the circle. The group $G=\Homeo(\B S^1)_0$
acts on $X$ by the reparametrisations. Let $\F a\in H^1(X;\B Z)$
be the class equal to the pull back of a generator of
$H^1(\B S^1;\B Z)$ with respect to the evaluation at a point.
Then $\OP{ev}^*(\F a)\neq 0$ because the composition
$\OP{SO}(2)\subset G\stackrel{\OP{ev}}\to X\to S^1$ is
equal to the identity.
Since, moreover, the group $G$ is perfect, Corollary
\ref{C:perfect} applies and the corresponding cocycle
$\F G$ is cohomologically nontrivial on $G$.
\hfill $\diamondsuit$
\end{example}

Let us finish this section with an example of
a tautological construction providing actions with
nontrivial class of the cocycle $\F G$. 

\begin{example}
Consider a central extension of discrete groups
$$
0\to \B A\to \widehat G \to G\to 1
$$
and observe that $G$ acts on the space $X:=\B A\backslash E\widehat G$.
It follows from Theorem \ref{T:def} that the cohomology class
of the cocycle $\F G$ associated with this action is equal to
the class $\F E\in H^2(BG^{\delta};\B A)$ of the above central
extension. This shows that every class in the second cohomology
of a discrete group is equal to the class of the cocycle $\F G$
associated with an appropriate action.
\hfill $\diamondsuit$
\end{example}

\subsection*{Historical remarks}
The cocycle $\F G$ can be defined  for
an arbitrary, not necessarily closed, one-cochain $\alpha$
on a suitably defined subgroup of $\Homeo(X)$. It has been first
defined by Is\-ma\-gi\-lov, Losik, and Michor \cite{MR2270616} for a
primitive of a symplectic form and further studied by the
authors in \cite{GK2}.

The cocycle $\F K_{\alpha}$ (see Section \ref{SS:one-cocycle} for definition)
appears in Gambaudo and Ghys \cite{MR1452855} and in Arnold and Khesin
\cite[p.~247]{MR1612569} in the case of a symplectic ball.
It has been studied for a general symplectically aspherical
manifold in \cite{GK1}.

The local rotation number generalizes the rotation number 
of a homeomorphism of a circle.  There are related notions 
in the literature.  For example the rotation defined by 
Burger, Iozzi, and Wienhard in \cite[Definition 7.1]{MR2680425}
or the rotation vector of Franks \cite[Definition 2.1]{MR1325916}.

\section{Cocycles on $\Homeo(X,\F a)$}\label{S:cocycles}
In this section we discuss the basic properties of the cocycle $\F G$.
We also define an auxiliary one-cocycle $\F K_\alpha$ with values in
the space of functions on $X$ modulo constants.  The main result of
this section is Lemma \ref{L:continuous} stating that under suitable 
assumptions $\F K_\alpha$ takes values in the space of {\em continuous} 
functions on $X$ modulo constants.

Let $X$ be a path-connected, topological space admitting
a universal cover $\widetilde X\to X$.
Let ${\F a}\in H^1(X;\B A)$ be a cohomology class, where
$\B A$ is an Abelian group of trivial coefficients.
Let $G\subseteq \Homeo(X,{\F a})$ be a group of homeomorphisms
of $X$ preserving the class ${\F a}$.

\subsection{An explicit formula for the cocycle $\F G$}\label{SS:explicit}
Let $\alpha\in Z^1(X;\B A)$ be a singular one-cocycle representing
the class $\F a$. Let $x\in X$ be a reference point.
Define $\F G_{x,\alpha}\colon G\times G\to \B A$
by
$$
\F G_{x,\alpha}(g,h):=\int_\gamma g^*\alpha - \alpha
$$
where $\gamma$ is a path from $x$ to $hx$.
Recall that the expression $\int_\gamma\sigma$
denotes the natural pairing of a chain $\gamma$
and a cochain $\sigma$. 
We find this nonstandard notation useful because in many 
concrete examples presented in the paper the cocycle
$\alpha$ is defined by the integration of a differential
form over smooth paths.

The following basic properties are straightforward to prove.

\begin{lemma}\label{L:G}
\  
\begin{enumerate}
\item
The function $\F G_{x,\alpha}$ is a two-cocycle on $\Homeo(X,\F a)$.
That is it satisfies the following identity:
$$
\F G_{x,\alpha}(h,k)-\F G_{x,\alpha}(gh,k)+
\F G_{x,\alpha}(g,hk)-\F G_{x,\alpha}(g,h)=0.
$$
\item
The value $\F G_{x,\alpha}(g,h)$ does not depend on the
choice of a path from $x$ to $hx$.
\item
The cohomology class of the cocycle $\F G_{x,\alpha}$
depends neither on the choice of the reference
point $x$ nor on the choice of the cocycle~$\alpha$.
\item If either $g$ preserves $\alpha$ or $h$ preserves $x$
then $\F G_{x,\alpha}(g,h)=0$.
\qed
\end{enumerate}
\end{lemma}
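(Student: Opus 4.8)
The plan is to verify each of the four assertions directly from the defining formula $\F G_{x,\alpha}(g,h)=\int_\gamma g^*\alpha-\alpha$, using only that $\alpha$ is a cocycle, that $\F a$ is $G$-invariant, and elementary properties of the chain-cochain pairing. First I would dispose of part (2), since the well-definedness underpins everything else: if $\gamma$ and $\gamma'$ are two paths from $x$ to $hx$, then $\gamma-\gamma'$ is a cycle, and it is nullhomologous because it bounds (it can be written as the boundary of a $2$-chain since both paths are homotopic in the universal-cover sense — or more simply, $\gamma-\gamma'$ is homologous to a cycle in $X$ on which the cocycle $g^*\alpha-\alpha$, which represents $g^*\F a-\F a=0$, pairs to zero). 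Concretely: $g^*\alpha-\alpha=\delta f$ for some $0$-cochain (function) $f$ on $X$, since $g^*\F a=\F a$; then $\int_\gamma(g^*\alpha-\alpha)=\int_\gamma \delta f = f(hx)-f(x)$, which manifestly does not depend on $\gamma$. This observation is the crux and it will make parts (1), (3), (4) almost immediate, so I would introduce the potential $f$ (with $\delta f = g^*\alpha-\alpha$, depending on $g$; write $f_g$) early.

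Next, for the cocycle identity (1), I would substitute. Writing $f_g$ for a fixed primitive of $g^*\alpha-\alpha$, one has $\F G(g,h)=f_g(hx)-f_g(x)$. One then checks the relation $f_{gh}=f_g\circ$(nothing) — more precisely, since $(gh)^*\alpha-\alpha=h^*(g^*\alpha-\alpha)+(h^*\alpha-\alpha)=h^*\delta f_g+\delta f_h=\delta(f_g\circ h+f_h)$, one may take $f_{gh}=f_g\circ h + f_h$ up to a constant, and constants drop out of $f(hx)-f(x)$. Substituting this cocycle relation for the primitives into the four-term expression $\F G(h,k)-\F G(gh,k)+\F G(g,hk)-\F G(g,h)$ and collecting terms, everything cancels. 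Alternatively, and perhaps cleaner for the write-up, I would do the telescoping directly at the level of integrals: $\F G(gh,k)=\int_\gamma (gh)^*\alpha-\alpha$ where $\gamma$ runs from $x$ to $kx$, split $(gh)^*\alpha-\alpha=h^*(g^*\alpha-\alpha)+(h^*\alpha-\alpha)$, use $\int_\gamma h^*\sigma=\int_{h\gamma}\sigma$, and recognize $h\gamma$ as a path from $hx$ to $hkx$ — then the three remaining pieces reassemble into $\F G(g,hk)$ plus $\F G(h,k)$ plus $\F G(g,h)$, possibly after adding a path from $x$ to $hx$ and invoking (2). Part (3) follows the same way: changing $x$ to $x'$ changes $\F G_{x,\alpha}(g,h)=f_g(hx)-f_g(x)$ to $f_g(hx')-f_g(x')$, and the difference $\big(f_g(hx')-f_g(hx)\big)-\big(f_g(x')-f_g(x)\big)$, as a function of $(g,h)$, is a coboundary $\delta b$ with $b(g):=f_g(x')-f_g(x)=\int_\eta g^*\alpha-\alpha$ for a fixed path $\eta$ from $x$ to $x'$ — one checks $b(gh)=b(g)+b(h)+(\text{term})$... actually one checks directly that $\F G_{x',\alpha}-\F G_{x,\alpha}=\delta b$ where $\delta b(g,h)=b(h)-b(gh)+b(g)$. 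Changing $\alpha$ within its cohomology class, $\alpha'=\alpha+\delta c$, changes $g^*\alpha-\alpha$ to $g^*\alpha-\alpha+\delta(g^*c-c)$, and $\int_\gamma \delta(g^*c-c)=(g^*c-c)(hx)-(g^*c-c)(x)$, again of coboundary form.

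Finally part (4) is immediate from the formula: if $g$ preserves $\alpha$ then $g^*\alpha-\alpha=0$ identically, so the integrand vanishes and $\F G_{x,\alpha}(g,h)=0$; if $h$ preserves $x$ then one may take the constant path $\gamma$ at $x$, over which any cochain integrates to $0$. I do not anticipate a serious obstacle anywhere — these are formal manipulations — but the one point requiring genuine care is the justification in (2) that a primitive $f_g$ of $g^*\alpha-\alpha$ exists as a genuine $0$-cochain (function) on $X$: this uses that $X$ is path-connected (so $H^0$ is understood) and that $g^*\F a-\F a=0$ in $H^1(X;\B A)$ means the cocycle $g^*\alpha-\alpha$ is an exact singular $1$-cocycle, hence of the form $\delta f_g$. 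Once that is in place the rest is bookkeeping, and I would organize the final write-up around the single identity $\F G_{x,\alpha}(g,h)=f_g(hx)-f_g(x)$ together with $f_{gh}\equiv f_g\circ h+f_h \pmod{\text{constants}}$, from which (1), (3), (4) all drop out in a line or two each.
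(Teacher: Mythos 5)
Your proof is correct and follows essentially the same route as the paper: your potential $f_g$ is precisely the paper's cocycle $\F K_{\alpha}(g)$ (defined by $\delta(\F K_{\alpha}(g))=g^*\alpha-\alpha$), your key identity $\F G_{x,\alpha}(g,h)=f_g(hx)-f_g(x)$ is Lemma \ref{L:G-K}, and $f_{gh}\equiv f_g\circ h+f_h$ modulo constants is the paper's one-cocycle relation for $\F K_{\alpha}$, so all four parts reduce to the same bookkeeping the paper has in mind. One small imprecision in (4): a constant singular $1$-simplex is not the zero chain, so it is not literally true that \emph{any} cochain integrates to $0$ over it; your argument still goes through because the integrand $g^*\alpha-\alpha=\delta f_g$ is exact, whence the pairing equals $f_g(hx)-f_g(x)=0$ when $hx=x$.
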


\subsection{On singular one-cocycles}\label{SS:singular}
The results of this section are used to prove Lemma \ref{L:continuous}.

Since $H^1(X;\B A)=\Hom(\pi_1(X),\B A)$ one can define a cover
$$
\B A\to X_\F a:=\widetilde X\times_{\pi_1(x)}\B A\to X,
$$
where $\widetilde X$ is the universal cover of $X$ and $\pi_1(x)$
acts on $\B A$ via homomorphism defined by $\F a$.
In what follows, the action $\B A\times X_{\F a}\to X_{\F a}$ by 
the deck transformations will be denoted additively: $(a,z)\mapsto a+z$.

Let $x\in X$ be a reference point in $X$ and let
$\tilde x\in p^{-1}(x)$ be a reference point
in $X_{\F a}$.
Let $\alpha$ be a singular cocycle representing the class $\F a$.
That is, $\alpha$ is a homomorphism $C_1(X;\B A)\to \B A$
defined on the group of chains on $X$ with the coefficients
in $\B A$. It defines an $\B A$-equivariant map
$\B a\colon X_\F a\to\B A$ in the following way.
Given a point $\tilde y\in p^{-1}(y)$ let $\gamma\colon [0,1]\to X$
be a path from $x$ to $y$. Let $\tilde\gamma\colon [0,1]\to X_{\F a}$
be its lift such that $\tilde\gamma(0)=\tilde x$.  Then
we define $\B a\left(\tilde y\right)$ as the unique element such that
$\int_\gamma\alpha+\tilde y
=\B a\left(\tilde y\right)+\tilde\gamma(1)$.
If we put $\tilde y := \tilde\gamma(1)$ we obtain that
$$
\B a(\tilde\gamma(1)) = \int_\gamma\alpha.
$$

Let us check that $\B a$ does not depend on the choice
of the path $\gamma$. Let $\gamma_\pm$ be two paths
from $x$ to $y$ and let $\B a_-$ and $\B a_+$
denote the corresponding maps.
By letting $\tilde y = \tilde\gamma_+(1)$ in the
equality
$$
\int_{\gamma_+}\alpha +\tilde\gamma_-(1)=
\int_{\gamma_-}\alpha +\tilde\gamma_+(1)
$$
we get
$$
\int_{\gamma_+}\alpha +\tilde\gamma_-(1)=
\int_{\gamma_-}\alpha +\tilde y
$$
which shows that
$\B a_+(\tilde y)=\int_{\gamma_+}\alpha = \B a_-(\tilde y)$
as claimed.

The equivariance of $\B a$ is immediate from the definition.
Another choice of a reference point results in changing $\B a$
by an additive constant.

Let $\B a\colon X_\F a\to\B A$ be an $\B A$-equivariant function.
Let $\gamma\colon [0,1]\to X$ be a path and let
$\tilde\gamma\colon [0,1]\to X_a$ be its lift. The following
formula defines a singular one-cocycle with values in $\B A$.
$$
\int_\gamma\alpha=
\B a\left(\tilde\gamma(1)\right)-
\B a\left(\tilde\gamma(0)\right)
$$

\begin{lemma}\label{L:singular}
The above constructions are inverse to each other and
hence provide a bijective correspondence between
singular one-cocycles in the class $\F a\in H^1(X,\B A)$
and $\B A$-equivariant maps $\B a\colon X_\F a\to \B A$
up to the constants.
\end{lemma}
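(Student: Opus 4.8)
The plan is to verify the two constructions are mutually inverse by chasing definitions, treating each direction separately. First, starting from a singular cocycle $\alpha$, we build the equivariant map $\B a$ as in the text (with $\B a(\tilde\gamma(1)) = \int_\gamma\alpha$ once we normalize at the reference point), and then feed $\B a$ into the second construction to obtain a cocycle $\alpha'$ defined by $\int_\gamma\alpha' = \B a(\tilde\gamma(1)) - \B a(\tilde\gamma(0))$. We must show $\int_\gamma\alpha' = \int_\gamma\alpha$ for every path $\gamma$. Write $\gamma$ as a path from $y_0$ to $y_1$, pick lifts, and connect $x$ to $y_0$ by an auxiliary path $\delta$; concatenating, $\delta\cdot\gamma$ runs from $x$ to $y_1$ while $\delta$ runs from $x$ to $y_0$. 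Then $\B a(\tilde\gamma(1)) = \int_{\delta\cdot\gamma}\alpha = \int_\delta\alpha + \int_\gamma\alpha$ and $\B a(\tilde\gamma(0)) = \int_\delta\alpha$, so the difference is exactly $\int_\gamma\alpha$. (One should note $\alpha$ and $\alpha'$ agree on all singular $1$-simplices, hence as cochains, since both are determined by their values on paths.)

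Second, starting from an equivariant map $\B a\colon X_\F a\to\B A$, define $\alpha$ by $\int_\gamma\alpha = \B a(\tilde\gamma(1)) - \B a(\tilde\gamma(0))$; one first checks this is well-defined (independent of the chosen lift $\tilde\gamma$: two lifts differ by a deck transformation $a\in\B A$, and equivariance makes both endpoint values shift by the same $a$, so the difference is unchanged) and is genuinely a cocycle (additive under concatenation, vanishing on constant paths, hence descending to a homomorphism on $C_1(X;\B A)$ whose coboundary vanishes). One also checks $[\alpha] = \F a$: the holonomy of $\alpha$ around a loop $\gamma$ based at $x$ is $\B a(a\cdot\tilde x) - \B a(\tilde x) = $ the value on $\B A$ by which $\pi_1$ acts, which is precisely the homomorphism $\F a$. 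Now run the first construction on this $\alpha$ to get $\B a'$, normalized so $\B a'(\tilde\gamma(1)) = \int_\gamma\alpha$ for $\gamma$ starting at $x$ with lift starting at $\tilde x$. Then $\B a'(\tilde y) = \int_\gamma\alpha = \B a(\tilde\gamma(1)) - \B a(\tilde x) = \B a(\tilde y) - \B a(\tilde x)$, so $\B a' = \B a$ up to the additive constant $\B a(\tilde x)$, which is exactly the ambiguity the statement allows.

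The bookkeeping obstacle, and the step I would be most careful with, is the normalization constants and reference points: the map $\B a$ attached to a cocycle is only defined up to an additive constant (choice of $\tilde x$), and likewise the cocycle attached to $\B a$ is genuinely canonical but the round trip reintroduces the constant $\B a(\tilde x)$. So the correct statement of "inverse to each other" is inverse \emph{as maps between $\{$cocycles in $\F a\}$ and $\{\B A$-equivariant maps$\}/\B A$}, and I would make sure every equality is stated modulo this. A secondary routine point is confirming that a $1$-cochain on a path-connected space with a universal cover is determined by its values on $1$-simplices (paths) together with the cocycle condition, so that "agree on all paths" upgrades to "equal as cochains"; this is standard but worth a sentence. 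No single step is deep — the content is entirely in organizing the two definition-chases so the constants match up, which is why the lemma is stated without proof-obstacles and the properties are called "straightforward."
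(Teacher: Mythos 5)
Your proposal is correct and follows essentially the same route as the paper's own proof: a two-way definition chase verifying $\int_\gamma\alpha=\B a(\tilde\gamma(1))-\B a(\tilde\gamma(0))$ in one direction and recovering $\B a$ up to an additive constant in the other. You are in fact slightly more thorough than the paper (which only checks the identity for paths based at the reference point, and leaves the well-definedness, cocycle, and $[\alpha]=\F a$ checks implicit in the preceding construction), but the underlying argument is the same.
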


\begin{proof}
Let $\alpha $ be a singular one-cocycle representing the class $\F a$.
It defines an equivariant map $\B a\colon X_{\F a}\to \B A$
such that $\int_\gamma\alpha+\tilde y
=\B a\left(\tilde y\right)+\tilde\gamma(1)$
for every path $\gamma\colon [0,1]\to X$ from $x$ to $y$.
We need to check that
$
\int_\gamma\alpha=
\B a\left(\tilde\gamma(1)\right)-
\B a\left(\tilde\gamma(0)\right)
$.

Let $\tilde y:=\tilde\gamma(1)$ where the lift
$\tilde\gamma$ is chosen so that $\B a(\tilde\gamma(0))=0$.
Then
$$
\int_\gamma\alpha+\tilde\gamma(1)=
\B a\left(\tilde\gamma(1)\right)+\tilde\gamma(1)
$$
implies that $
\int_\gamma\alpha=\B a\left(\tilde\gamma(1)\right)$.

Conversely, let $\B a\colon X_{\F a}\to \B A$ be an $\B A$-equivariant
map. It defines a singular cocycle $\alpha$ by the identity
$\int_\gamma\alpha = \B a(\tilde\gamma(1))$, where
$\tilde\gamma$ is a lift of $\gamma$ such that
$\B a(\tilde\gamma(0))=0$. We then clearly get
that $\int_\gamma\alpha+\tilde\gamma(1)=
\B a\left(\tilde\gamma(1)\right)+\tilde\gamma(1)$.
\end{proof}

\subsection{The one-cocycle $\F K_{\alpha}$}\label{SS:one-cocycle}
If $f\in G\subset\Homeo(X,\F a)$ then $f^*\alpha - \alpha$
is an exact singular one-cocycle on $X$ and the identity
$\delta(\F K_{\alpha}(f))=f^*\alpha - \alpha$
defines a map
$$
\F K_{\alpha}\colon G\to C^0(X;\B A)/\B A.
$$
It is straightforward to check that $\F K_{\alpha}$ is a one-cocycle
(cf. \cite[Proposition 2.3]{GK1}). That is, it satisfies
$$
\F K_{\alpha}(fg)=\F K_{\alpha}(f)\circ g + \F K_{\alpha}(g)
$$
for all $f,g\in G$. 

\begin{lemma}\label{L:G-K}
If $h$ and $g$ are homeomorphisms preserving $\F a=[\alpha]$ then
$$
\F G_{x,\alpha}(g,h)=\F K_\alpha(g)(hx)-\F K_\alpha(g)(x).
$$
\qed
\end{lemma}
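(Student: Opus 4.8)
The plan is to unwind both sides of the claimed identity using the defining relation $\delta(\F K_\alpha(g)) = g^*\alpha - \alpha$ together with the definition of $\F G_{x,\alpha}$. Fix a path $\gamma$ from $x$ to $hx$; by Lemma \ref{L:G}(2) the value $\F G_{x,\alpha}(g,h)$ does not depend on this choice, so we are free to use any such $\gamma$. By definition,
$$
\F G_{x,\alpha}(g,h) = \int_\gamma (g^*\alpha - \alpha).
$$
Now $g^*\alpha - \alpha$ is an exact singular one-cocycle, and $\F K_\alpha(g) \in C^0(X;\B A)/\B A$ is a primitive, meaning $\delta(\F K_\alpha(g)) = g^*\alpha - \alpha$. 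The point is that pairing an exact one-cocycle $\delta\varphi$ against a one-chain $\gamma$ that is a path from $x$ to $hx$ gives, by the very definition of the coboundary operator $\delta$ on cochains dual to the boundary $\partial$ on chains,
$$
\int_\gamma \delta\varphi = \int_{\partial\gamma} \varphi = \varphi(hx) - \varphi(x).
$$
Applying this with $\varphi$ a representative of $\F K_\alpha(g)$ yields $\F G_{x,\alpha}(g,h) = \F K_\alpha(g)(hx) - \F K_\alpha(g)(x)$, which is exactly the assertion.

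The one subtlety to address is that $\F K_\alpha(g)$ is only well defined as an element of $C^0(X;\B A)/\B A$, i.e. up to an additive constant. This causes no problem because the right-hand side is a \emph{difference} of values at two points, and adding a constant to $\F K_\alpha(g)$ leaves $\F K_\alpha(g)(hx) - \F K_\alpha(g)(x)$ unchanged. Thus the expression on the right is genuinely well defined, and the identity makes sense. I would also remark that the formula is consistent with the cocycle identity: combining Lemma \ref{L:G-K} with the one-cocycle relation $\F K_\alpha(fg) = \F K_\alpha(f)\circ g + \F K_\alpha(g)$ recovers the two-cocycle relation for $\F G$ from Lemma \ref{L:G}(1), though this cross-check is not needed for the proof.

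The main (and really the only) obstacle is purely bookkeeping: one must be careful that the sign conventions for $\delta$, for $\partial$, and for the pairing $\int_\gamma$ are the ones making $\int_\gamma \delta\varphi = \varphi(\gamma(1)) - \varphi(\gamma(0))$ hold on the nose rather than up to a sign. With the conventions fixed in Section \ref{SS:singular} — where exactly this type of identity, $\int_\gamma \alpha = \B a(\tilde\gamma(1)) - \B a(\tilde\gamma(0))$, is used for equivariant primitives on the cover — the computation is immediate, and the lemma follows with no further work. $\qed$
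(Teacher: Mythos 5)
Your proof is correct and follows exactly the paper's argument: unwind the definition of $\F G_{x,\alpha}$, substitute $\delta(\F K_\alpha(g))$ for $g^*\alpha-\alpha$, and evaluate the exact cocycle on the path from $x$ to $hx$ to get the difference of endpoint values. The additional remarks on well-definedness modulo constants and sign conventions are sensible but not needed beyond what the paper's one-line computation already contains.
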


\begin{proof}
It is an immediate consequence the definition of the 
cocycle~$\F K_{\alpha}$. Indeed, we have
$$
\F G_{x,\alpha}(g,h)=\int_{x}^{hx}g^*\alpha - \alpha 
=\int_{x}^{hx}\delta(\F K_{\alpha}(g))
=\F K_\alpha(g)(hx)-\F K_\alpha(g)(x).
$$
\end{proof}

\begin{lemma}\label{L:continuous}
Assume that $X$ is paracompact.
Let $\F a\in H^1(X;\B R)$. There exists a singular cocycle $\alpha$ representing
the class $\F a$ such that for any homeomorphism $h\in \Homeo(X,\F a)$
the function $\F K_\alpha(h)$ is a continuous function.
\end{lemma}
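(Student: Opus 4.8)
The plan is to construct the singular cocycle $\alpha$ not abstractly but via the equivariant-function description provided by Lemma~\ref{L:singular}. By that lemma, a choice of singular one-cocycle representing $\F a$ is the same (up to constants) as a choice of $\B R$-equivariant map $\B a\colon X_{\F a}\to\B R$, and a cocycle is ``continuous'' precisely when the corresponding $\B a$ is continuous. So the first step is to produce a \emph{continuous} $\B R$-equivariant map $\B a\colon X_{\F a}\to\B R$, where $X_{\F a}$ is the principal $\B R$-bundle (with discrete structure group, but the total space is an honest topological space) associated with the class $\F a\in H^1(X;\B R)=\Hom(\pi_1(X),\B R)$. This exists because $X$ is paracompact: the bundle $\B R\to X_{\F a}\to X$ has contractible fibre, hence admits a continuous section $s\colon X\to X_{\F a}$, and one defines $\B a$ to be the unique $\B R$-equivariant map with $\B a\circ s\equiv 0$, i.e. $\B a(a+s(y))=a$. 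Equivariance is built in, and continuity of $\B a$ follows from continuity of $s$ together with continuity of the $\B R$-action on $X_{\F a}$; this is where paracompactness is genuinely used, to guarantee the section exists (a partition-of-unity / contractible-fibre argument, or equivalently the vanishing of the obstructions in $H^{k+1}(X;\pi_k(\B R))=0$).

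Having fixed such a continuous $\B a$, let $\alpha$ be the singular cocycle it determines via Lemma~\ref{L:singular}: for a path $\gamma$ with lift $\tilde\gamma$ normalised so that $\B a(\tilde\gamma(0))=0$, one has $\int_\gamma\alpha=\B a(\tilde\gamma(1))$. Now take any $h\in\Homeo(X,\F a)$. Since $h$ preserves $\F a$, it lifts to an $\B R$-equivariant homeomorphism $h_{\F a}\colon X_{\F a}\to X_{\F a}$ covering $h$ (the lift is unique up to an additive constant, and any choice will do — changing it only alters $\F K_\alpha(h)$ by a constant, which is invisible in $C^0(X;\B R)/\B R$). The next step is to compute $\F K_\alpha(h)$ in terms of $\B a$ and $h_{\F a}$. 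Unwinding the definition $\delta(\F K_\alpha(h))=h^*\alpha-\alpha$, and using the formula $\int_\gamma\alpha=\B a(\tilde\gamma(1))-\B a(\tilde\gamma(0))$ for \emph{any} lift, one finds that the difference $\B a(h_{\F a}(\tilde z))-\B a(\tilde z)$ is independent of the lift $\tilde z$ of a given point $z\in X$ (by $\B R$-equivariance of both $\B a$ and $h_{\F a}$), so it descends to a well-defined function $X\to\B R$; this function is precisely a representative of $\F K_\alpha(h)$.

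It remains to observe that this representative is continuous. It is the composite $z\mapsto(\text{any local lift }\tilde z)\mapsto\B a(h_{\F a}(\tilde z))-\B a(\tilde z)$; locally over $X$ one can choose the lift continuously (the covering $p\colon X_{\F a}\to X$ is a local homeomorphism onto an open set in the discrete-group sense, or one simply uses the continuous global section $s$ and sets $\tilde z=s(z)$), and $\B a$, $h_{\F a}$ are continuous, so the composite is continuous. Therefore $\F K_\alpha(h)$ has a continuous representative, which is exactly the claim. The main obstacle is the first step: producing the continuous equivariant map $\B a$, equivalently a continuous section of $X_{\F a}\to X$. One must be slightly careful that $X_{\F a}$, although built from a discrete group $\B R$, is given the topology making it a genuine covering-type space with continuous $\B R$-action (as set up in Section~\ref{SS:singular}); granting that, the contractibility of the fibre $\B R$ and paracompactness of $X$ give the section by the standard obstruction-theoretic argument. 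Everything after that is a formal manipulation of the $\B a\leftrightarrow\alpha$ dictionary from Lemma~\ref{L:singular} and the cocycle identity for $\F K_\alpha$.
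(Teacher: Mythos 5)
Your proof follows the paper's argument almost exactly: build a continuous $\B R$-equivariant map $\B a$ from a section of the associated $\B R$-bundle, define the representative of $\F K_\alpha(h)$ as the descent of $\B a\circ h_{\F a}-\B a$, and verify via the $\B a\leftrightarrow\alpha$ dictionary of Lemma~\ref{L:singular} that this is indeed $\F K_\alpha(h)$. The one place where your write-up is internally inconsistent is the existence of the section: you ask for a continuous section of $\B R\to X_{\F a}\to X$ while also insisting that $X_{\F a}$ carry the covering-space topology, and these two demands conflict. With the covering topology the fibre is discrete, hence not contractible, and a covering with nontrivial monodromy (which is exactly the case $\F a\neq 0$) admits no continuous section at all, so the obstruction-theoretic argument you invoke does not apply to $X_{\F a}$ as such. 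The paper resolves this by putting two topologies on the same underlying set: first take $E=\widetilde X\times_{\pi_1 X}\B R$ with $\B R$ carrying its usual order topology, so the fibre is genuinely contractible and paracompactness of $X$ yields a continuous section, hence a continuous equivariant $\B a\colon E\to\B R$; then observe that $X_{\F a}=\widetilde X\times_{\pi_1 X}\B R^\delta$ is the same set with a \emph{finer} topology, so $\B a$ remains continuous as a map $X_{\F a}\to\B R$. With that substitution your argument goes through unchanged; the remaining steps (equivariance, independence of the lift, descent to $X$, the computation $\F K(g)(y)-\F K(g)(x)=\int_\gamma g^*\alpha-\alpha$, and continuity of the composite) all match the paper's proof.
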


\begin{remark}
If $X$ is a differentiable manifold and $\B A=\B R$ then every
cohomology class is represented by a smooth and closed differential
form $\alpha$. It follows that for any diffeomorphism
$h\in\Diff(X,\F a)$ the function $\F K_\alpha(h)$ is smooth.
\end{remark}

\begin{proof}[Proof of Lemma \ref{L:continuous}]
Let us consider the real numbers $\B R$ endowed with with the usual order
topology and consider the bundle
$$
\B R\to E=\widetilde X\times_{\pi_1 X}\B R\stackrel{p}\to X.
$$
Since the fibre is contractible and the base is paracompact
it admits a continuous section $s\colon X\to E$. Such a section
defines a continuous equivariant function
$\B a\colon E\to \B R$ by the identity
$p(\tilde x)=\B a(\tilde x)+sp(\tilde x)$.
Notice that $X_\F a=\widetilde X\times_{\pi_1 X}\B R^\delta$ is the same set
as $E$ but with a finer topology.  Thus $\B a\colon X_\F a\to\B R$ is still
a continuous function.

Let  $\tilde g\in \Homeo(X_{\F a})$ be an $\B R$-equivariant lift
of $g\in\Homeo(X,\F a)$. Define a continuous function
$\widehat{\F K}(g)\colon X_{\F a}\to \B R$ by
$$
\widehat{\F K}(g)(\tilde x) :=
\B a\left(\tilde g\tilde x\right)-\B a(\tilde x).
$$
Since both $\tilde g$ and $\B a$ are $\B R$-equivariant the function
$\widehat{\F K}(g)$ is $\B R$-invariant and thus descends to a
continuous function $\F K(g)\colon X\to \B R$.

Let us show that $\F K=\F K_\alpha$.  Let $\gamma$ be a path between $x$
and $y$.  Let $\tilde\gamma$ be its lift with endpoints at
$\tilde x$ and $\tilde y$.  Then
\begin{align*}
\F K(g)(y)-\F K(g)(x)
&=\left(\B a(\tilde g\tilde y)-\B a(\tilde g\tilde x)\right)-\left(\B a(\tilde y)-\B a(\tilde x)\right)\\
&=\int_{g\gamma}\alpha-\int_\gamma\alpha=\int_\gamma g^*\alpha-\alpha.
\end{align*}
\end{proof}

\section{Distortion in groups}\label{S:distortion}
\subsection{Quasimorphisms}\label{SS:q-m}
Let $\F q\colon G\to \B R$ be a map defined on a group $G$.
The {\bf defect} $D(\F q)$ of the map $\F q$ is defined to be
$$
D(\F q):=\sup_{g,h\in G}|\F q(g) - \F q(gh) + \F q(h)|.
$$
If the defect of $\F q$ is finite then $\F q$ is called
a {\bf quasimorphism}. A quasimorphism $\F q$ is called
{\bf homogeneous} if $\F q(g^n)=n\F q(g)$ for all $n\in \B Z$
and $g\in G$. For every quasimorphism $\F q$ the formula
$$
\widehat {\F q}(g):=\lim_{n\to\infty}\frac{\F q(g^n)}{n}
$$
defines a homogeneous quasimorphism called the homogenisation
of~$\F q$. Moreover, $|\widehat {\F q}(g)-\F q(g)|\leq D$
for all $g\in G$ \cite[Lemma 2.21]{MR2527432}. Thus $\F q$ is unbounded
if and only if so is its homogenisation. 

\begin{proposition}\label{P:q-m}
Let $\F a\in H^1(X;\B R)$.
Let $G\subseteq \Homeo(X,\F a)$ be a subgroup on which
the cocycles $\F G_x$ and $\F G_{y}$ are bounded,
for some $x,y\in X$.
Then the map
$\F q\colon G\to \B R$ defined by
$$
\F q(g):= \F K_{\alpha}(g)(y) - \F K_{\alpha}(g)(x)
$$
is a quasimorphism on $G$ and
$D(\F q)\leq \|\F G_x-\F G_y\|\leq\|\F G_x\|+\|\F G_y\|$,
where $\|\cdot \|$ denotes the supremum norm of a bounded
function. 
\end{proposition}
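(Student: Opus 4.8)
The plan is to identify the coboundary-type expression occurring in the definition of the defect of $\F q$ with the value of the difference cocycle $\F G_x-\F G_y$, after which both finiteness of the defect and the stated estimate are immediate. Throughout, write $\F K=\F K_\alpha$.

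First I would expand $\F q(gh)$ using the one-cocycle identity $\F K(gh)=\F K(g)\circ h+\F K(h)$ from Section~\ref{SS:one-cocycle}. Evaluating this at $y$ and at $x$ and subtracting gives
$$
\F q(gh)=\bigl(\F K(g)(hy)-\F K(g)(hx)\bigr)+\F q(h),
$$
so that
$$
\F q(g)-\F q(gh)+\F q(h)=\bigl(\F K(g)(y)-\F K(g)(x)\bigr)-\bigl(\F K(g)(hy)-\F K(g)(hx)\bigr).
$$
Next I would rewrite the right-hand side in terms of $\F G$. By Lemma~\ref{L:G-K} one has $\F K(g)(hx)-\F K(g)(x)=\F G_{x,\alpha}(g,h)$ and, with $y$ in place of $x$, $\F K(g)(hy)-\F K(g)(y)=\F G_{y,\alpha}(g,h)$. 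Substituting these and cancelling the terms $\F K(g)(y)$ and $\F K(g)(x)$ yields
$$
\F q(g)-\F q(gh)+\F q(h)=\F G_{x,\alpha}(g,h)-\F G_{y,\alpha}(g,h).
$$

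Taking absolute values and the supremum over all $g,h\in G$ then gives $D(\F q)=\|\F G_x-\F G_y\|$, and the triangle inequality bounds this by $\|\F G_x\|+\|\F G_y\|$; since both $\F G_x$ and $\F G_y$ are bounded on $G$ by hypothesis, this is finite, so $\F q$ is a quasimorphism. The computation is entirely formal, so there is no genuine obstacle; the only point requiring care is bookkeeping the order of composition in the cocycle identity for $\F K_\alpha$ together with the sign convention of Lemma~\ref{L:G-K} (the cocycle $\F G$ is computed along a path from $x$ to $hx$, so $\F K(g)(hx)$ enters with a plus sign).
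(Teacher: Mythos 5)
Your proof is correct and is essentially the same computation as the paper's: both expand the defect expression via the one-cocycle identity for $\F K_\alpha$ and identify the result with $\F G_x(g,h)-\F G_y(g,h)$ using Lemma \ref{L:G-K}. The only difference is presentational (you isolate $\F q(gh)$ first, the paper expands everything at once), and your observation that the identity actually gives $D(\F q)=\|\F G_x-\F G_y\|$ on $G$ is consistent with, and slightly sharper than, the stated inequality.
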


\begin{proof}
This is a straightforward computation using the cocycle identity for $\F K_{\alpha}$.
\begin{align*}
\F q(f) - \F q(fg) + \F q(g) =&
\F K_{\alpha}(f)(y) - \F K_{\alpha}(f)(x)\\
&-(\F K_{\alpha}(f)(gy) + \F K_{\alpha}(g)(y)  - \F K_{\alpha}(f)(gx) - \F K_{\alpha}(g)(x))\\
&+\F K_{\alpha}(g)(y) - \F K_{\alpha}(g)(x)\\
=&\F K_{\alpha}(f)(gx) - \F K_{\alpha}(f)(x) - (\F K_{\alpha}(f)(gy) - \F K_{\alpha}(f)(y))\\
=&\F G_x(f,h) - \F G_y(f,g).
\end{align*}
\end{proof}

\begin{example}\label{E:torus}
In this example we show that the boundedness of $\F G_x$ depends
on the choice of a point $x\in X$.
Let $X=\B R/\B Z\times \B R\cup\{\infty\}$ be the two-dimensional torus.
Let $\alpha$ be a singular one-cocycle defined by
$$
\int_{\gamma}\alpha := \tilde{\gamma}(1)-\tilde{\gamma}(0),
$$
where $\tilde{\gamma}\colon [0,1]\to \B R$ is a lift of
the composition of $\gamma$ followed by the projection
onto $\B R/\B Z$. Let $\F a$ be the class of $\alpha$.

Let $g\in \Homeo(X,\F a)$ be a homeomorphism defined by
$$
g(t,x):= (t+|x+1|-|x|,x+1).
$$
Then $\F K_{\alpha}(g^n)(t,x) = |x+n| - |x|$ and it follows that
\begin{align*}
\F G_{(0,0)}(g^m,g^n) &=
\F K_{\alpha}(g^m)(g^n(0,0))-\F K_{\alpha}(g^m)(0,0)\\
&= |m+n| - |n| -|m|.
\end{align*}
This shows that $\F G_{(0,0)}$ is unbounded (in fact, the
cocycle $\F G_{(t,x)}$ is un\-boun\-ded whenever $x$ is finite).
On the other hand, $g$ acts trivially on the
circle $\B R/\B Z\times \{\infty\}$ and hence
$\F G_{(t,\infty)}=0$.
\hfill $\diamondsuit$
\end{example}

\begin{remark}\label{R:set}
Let $\psi\colon G\to \Homeo(X,\F a)$ be an action.
The set $\Sigma_{\psi}$ consisting of points $x$ for which the cocycle
$\psi^*\F G_x$  is a bounded is an invariant of the action. If $X$ is compact
then it depends on $\F a$ but not on a continuous representative $\alpha$.
The above example shows that it can be proper. In Section~\ref{SS:top_rot} 
we provide an example of an action for which $\Sigma_{\psi}=~X$.
\end{remark}


Let $X$ be a compact space.
Let us define a pseudo-norm of an element $g\in\Homeo(X,\F a)$
by
$$
\|g\|_\alpha:=
\sup_{x,y\in X}|\F K_{\alpha}(g)(y) - \F K_{\alpha}(g)(x)|.
$$
This means that $\|\cdot\|_{\alpha}$ is symmetric and
satisfies the triangle inequality.
The finiteness of $\|g\|_{\alpha}$ is a consequence of
the compactness of $X$ according to Lemma \ref{L:continuous}.
It follows that if $\Gamma\subset\Homeo(X,\F a)$ is a subgroup
generated by a finite set $S$ then
$$
C\cdot|g|\geq \|g\|_\alpha,
$$
where $C:=\max \{\|s\|_\alpha| s\in S\}$ and $|g|$ denotes the word norm
of $g\in \Gamma$. This is just a special case of the standard
and straightforward to prove fact that 
any pseudo-norm on a group is Lipschitz 
with respect to the word norm.

The next theorem is the main result of this section. Recall that,
according to Lemma \ref{L:G-K} we have that
$$
\F G_{x,\alpha}(g,h)=\F K_{\alpha}(g)(hx)-\F K_{\alpha}(x).
$$
Moreover, it follows from Proposition \ref{P:q-m} that 
if $\F G_x$ and $\F G_y$ are bounded on
the cyclic group $\langle g\rangle$ generated by a homeomorphism $g$ then
the map $\F q\colon \langle g\rangle \to \B R$ defined by
$$
\F q(g^n) = \F K_{\alpha}(g^n)(y)-\F K_{\alpha}(g^n)(x)
$$
is a quasimorphism.

\begin{theorem}\label{T:q-m}
Let $\F a\in H^1(X;\B R)$ and let $g\in \Homeo(X,\F a)$
and assume that $X$ is compact.
Suppose that for some points $x,y \in X$ the cocycles
$\F G_x$ and $\F G_y$ are bounded on the cyclic subgroup
$\langle g\rangle \subset \Homeo(X,\F a)$ ge\-ne\-rated by $g$.
If the above quasimorphism $\F q\colon \langle g\rangle \to \B R$ 
is unbounded then $g$ is undistorted in $\Homeo(X,\F a)$.
\end{theorem}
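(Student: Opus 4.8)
The plan is to derive a linear lower bound on the word norm $|g^n|$ directly from the unboundedness of the quasimorphism $\F q$, using the pseudo-norm $\|\cdot\|_\alpha$ as a bridge. First I would observe that the assignment $g^n\mapsto \F q(g^n)=\F K_\alpha(g^n)(y)-\F K_\alpha(g^n)(x)$ is, by Proposition \ref{P:q-m}, a quasimorphism on the cyclic group $\langle g\rangle$; let $\widehat{\F q}$ be its homogenisation. The hypothesis that $\F q$ is unbounded on $\langle g\rangle$ is equivalent (by the $|\widehat{\F q}-\F q|\le D$ estimate recalled in Section \ref{SS:q-m}) to $\widehat{\F q}(g)\ne 0$, and since $\widehat{\F q}$ is homogeneous this gives $|\F q(g^n)|\ge |n|\cdot|\widehat{\F q}(g)| - D$ for all $n$, where $D=D(\F q)$ is finite.

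Next I would bound $|\F q(g^n)|$ above by the pseudo-norm: directly from the definitions, $|\F q(g^n)| = |\F K_\alpha(g^n)(y)-\F K_\alpha(g^n)(x)| \le \|g^n\|_\alpha$, because the supremum defining $\|g^n\|_\alpha$ ranges over all pairs of points of $X$. Now fix an arbitrary finitely generated subgroup $\Gamma\subseteq\Homeo(X,\F a)$ containing $g$, with a finite generating set $S$. By the discussion preceding the theorem — the standard fact that a pseudo-norm is Lipschitz with respect to any word norm — we have $\|h\|_\alpha\le C\cdot|h|_\Gamma$ for all $h\in\Gamma$, where $C=\max\{\|s\|_\alpha : s\in S\}$ and $|\cdot|_\Gamma$ is the word norm on $\Gamma$. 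Here the compactness of $X$ together with Lemma \ref{L:continuous} is what makes $C$ finite, since it guarantees each $\F K_\alpha(s)$ is a continuous (hence bounded) function on the compact space $X$.

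Chaining these inequalities, for every $n$,
\[
|n|\cdot|\widehat{\F q}(g)| - D \;\le\; |\F q(g^n)| \;\le\; \|g^n\|_\alpha \;\le\; C\cdot|g^n|_\Gamma,
\]
so $|g^n|_\Gamma \ge (|n|\cdot|\widehat{\F q}(g)| - D)/C$. Dividing by $n$ and letting $n\to\infty$ gives $\tau_\Gamma(g)=\lim_n |g^n|_\Gamma/n \ge |\widehat{\F q}(g)|/C > 0$, so $g$ has positive translation length in $\Gamma$. Since $\Gamma$ was an arbitrary finitely generated subgroup containing $g$, this is exactly the statement that $g$ is undistorted in $\Homeo(X,\F a)$.

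I do not anticipate a serious obstacle here: all the hard analytic content (continuity of $\F K_\alpha$, the cocycle and quasimorphism identities) has already been established in the lemmas and proposition above, so the proof is essentially an assembly of those facts with the elementary quasimorphism-homogenisation estimate. The one point requiring a little care is making explicit that the bound $C$ is uniform over the generating set and finite — this is where paracompactness/compactness of $X$ and Lemma \ref{L:continuous} are genuinely used — and noting that the argument is valid for every finitely generated $\Gamma$ containing $g$, which is what the definition of ``undistorted'' for a general group demands.
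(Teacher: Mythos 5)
Your proof is correct and follows essentially the same route as the paper's: the identical chain of inequalities $|n|\cdot|\widehat{\F q}(g)|-D\le|\F q(g^n)|\le\|g^n\|_\alpha\le C\cdot|g^n|$, divided by $n$, yields a positive translation length in every finitely generated subgroup containing $g$. The only (cosmetic) difference is that the paper first enlarges the given finitely generated subgroup by an extra generator before running the estimate, a step your argument shows is unnecessary.
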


\begin{remark}
It is often the case that to prove that an element $g$ is 
undistorted in a group $G$ one constructs a
homogeneous quasi-morphism $\F q\colon G\to \B R$ such that
$\F q(g)\neq 0$. Constructing such a quasi-morphism is
in general very difficult. The advantage of the above theorem
is that we only need to check that a naturally defined
quasi-morphism on a cyclic group is unbounded.
\end{remark}

\begin{proof}[Proof of Theorem \ref{T:q-m}]
Let $\Gamma'\subseteq \Homeo(X,\F a)$ be a group containing
$g$ and generated by a finite set $S'\subseteq \Gamma'$.
Consider the subgroup $\Gamma\subseteq \Homeo(X,\F a)$
generated by $S'$ and $h$. It is finitely generated by
the set $S:=S'\cup \{h\}$.

Let $\widehat{\F q}\colon \langle g\rangle\to \B R$ 
be the homogenisation of the quasi-morphism $\F q$.
The following calculation of the translation length of $g$
shows that $g$ is undistorted in $\Gamma$ and hence also
in $\Gamma'\subset \Gamma$. 
\begin{align*}
C\cdot\tau(g)& = \lim_{n\to \infty}\frac{C\cdot|g^n|}{n}\\
& \geq \lim_{n\to \infty}\frac{\|g^n\|_{\alpha}}{n}\\
& \geq \lim_{n\to \infty}\frac{|\F q(g^n)|}{n}\\
& \geq \lim_{n\to \infty}\frac{n|\widehat{\F q}(g)|-D}{n}\\
& = |\widehat{\F q}(g)| > 0.
\end{align*}
Since $\Gamma'$ is an arbitrary finitely generated subgroup of
$\Homeo(X,\F a)$, the element $g$ is undistorted in $\Homeo(X,\F a)$.
\end{proof}

\subsection{Proof of Theorem \ref{T:distortion}}\label{SS:proof_distortion}
Recall that we need to prove that if $x$ and $y$ are fixed points of $g$
and $\int_\gamma g^*\alpha-\alpha\neq 0$ then $g$ is undistorted in
$\Homeo(X,\F a)$. 

First observe that the cocycles $\F G_x$ and $\F G_y$
vanish identically on the cyclic group $\langle g \rangle$ because 
$x$ and $y$ are fixed points of $g$. By Proposition \ref{P:q-m} 
the defect of $\F q$ is zero (since it is bounded by $\|\F G_x\|+\|\F G_y\|=0$) 
and we obtain that $\F q\colon \langle g\rangle \to \B R$ 
is a homomorphism of groups. Furthermore
\begin{align*}
\F q(g)&=\F K_{\alpha}(g)(y) - \F K_{\alpha}(g)(x)\\
&= \int_{\gamma}g^*\alpha -\alpha \neq 0
\end{align*}
according to the hypothesis. Therefore $\F q$ is unbounded and
the statement follows from Theorem \ref{T:q-m}.
\qed

\subsection{Bounded cocycles}\label{SS:bounded}
In what follows we are interested in bounded cohomology
of a group with the integer coefficients; see Gromov
\cite{MR686042} and Monod \cite{MR1840942} for a background
on bounded cohomology.

\begin{example}
(Ghys \cite[Section 6.3]{MR1876932})
\label{E:boundedZ}
The second bounded cohomology $H^2_{\OP{b}}(B\B Z;\B Z)$
of the integers with integer coefficients is isomorphic
to $\B R/\B Z$. To see this let
$\F c \colon \B Z \times \B Z\to \B Z$
be a bounded two-cocycle. As an ordinary cocycle it is
a coboundary since $H^2(B\B Z;\B Z)=0$.
If $\F c=\delta\F b$ then, since $\F c$ is
bounded, the cochain $\F b$ is a quasimorphism.
The homogenisation of $\F b$ (which is a real cochain in general)
is given by $\widehat {\F b}(n)=rn$ for some real number $r\in \B R$.
The required isomorphism
$$
H^2_{\OP{b}}(B\B Z;\B Z)\to \B R/\B Z
$$
is defined by $[\F c]\mapsto r +\B Z$.
\hfill $\diamondsuit$
\end{example}

Let $g\in \Homeo(X,\F a)$ and let $x\in X$ be a point for which the
cocycle $\F G_x$ is bounded on the cyclic group generated by $g$.
The cohomology class $r_x(g)+\B Z\in H^2(\langle g\rangle;\B Z)$ 
represented by the pullback of $\F G_x$ is called the 
{\bf local rotation number} of $g$ at the point $x\in X$.

Let us explain the geometry of the local rotation number.
Take a path $\eta_{x,1}\colon [0,1]\to~X$ from $x$ to $gx$
and let $\eta_{x,n}$ be the concatenation of paths
$g^k(\eta_{x,1})$ for $k$ ranging from $0$ to $n-1$.
Define a map $\F b_x\colon \langle g\rangle \to \B R$
by
$$
\F b_x(g^n):=-\int_{\eta_{x,n}}\alpha.
$$
Observe that $\delta\F b_x = \F G_x$ on the cyclic group
$\langle g\rangle$. Since $\F G_x$ is bounded on $\langle g\rangle$
we get that $\F b_x$ is a quasi-morphism and that its
homogenisation satisfies $\widehat {\F b}_x(g^n)=r_x(g)n$ for
a suitable representative of the local rotation number of $g$ at $x$.
This shows that there exists a constant $C_x>0$ such that
$$
|\F b_x(g^n) - r_x(g)n|\leq C_x  
$$
for all $n\in \B Z$. We thus obtain that
$$
\lim_{n\to \infty} \frac{\F b_x(g^n)}{n} = r_x(g)
$$
and hence the above limit represents the local rotation
number of $g$ at $x$. Notice that, since $\alpha$ has integral
periods, the dependence of $\F b_x(g^n)$ on the choice of
the path $\eta_{x,1}$ is up to an integer constant only.
This implies that the above computation of the
local rotation number does not depend on the choice
of a path $\eta_{x,1}$.

\begin{remark}
If $X=\B S^1$ then the cocycle $\F G$ corresponding to
the length form is equal to the Euler
cocycle. Consequently, the local rotation number defined above
equals the classical topological rotation number of a
homeomorphism of the circle \cite[Section 6.3]{MR1876932}.
\end{remark}

\subsection{Proof of Theorem \ref{T:rotation}}\label{SS:proof_rotation}
In order to apply Theorem \ref{T:q-m} we need to prove that the quasimorphism 
$\F q\colon \langle g\rangle \to \B R$ from Proposition \ref{P:q-m} is unbounded.

Let $\gamma,\eta_{x,n},\eta_{y,n}\colon [0,1]\to X$
be  paths from $x$ to $y$, $x$ to $g^nx$ and $y$
to $g^ny$ respectively and $n\in \B Z$. As above assume that
$\eta_{x,n}$ is a concatenation of the paths $g^k (\eta_{x,1})$
for $k$ ranging from $0$ to $n-1$ and similarly for $\eta_{y,n}$.

Let $\F b_x,\F b_y\colon G\to \B R$ be defined as above and
let $\square_n$ be a concatenation of $-\gamma$, $\eta_{x,n}$,
$g^n \gamma$ and $-\eta_{y,n}$. We get the following computation.
\begin{align*}
\F q(g^n) &= \int_{\gamma} (g^n)^*\alpha - \alpha\\
&= \int_{\square_n}\alpha - \int_{\eta_{x,n}}\alpha + \int_{\eta_{y,n}}\alpha\\
&= n \int_{\square_1}\alpha + \F b_x(g^n) - \F b_y(g^n)\\
&= n \left( \int_{\square_1}\alpha + (r_x(g)-r_y(g))\right ) + O(1).
\end{align*}
Since $\alpha$ has integral periods and the difference $r_x(g)-r_y(g)\notin \B Z$
by the hypothesis, we get that the quasi-morphism $\F q$ is unbounded. 
\qed

\subsection{Some consequences of Theorem \ref{T:rotation}}
\label{SS:top_rot}
\begin{corollary}\label{C:distorted}
If $g\in \Homeo(X;\F a)$ is distorted then the local rotation
number is constant at all points $x$ for which $\F G_x$ is bounded.
\qed
\end{corollary}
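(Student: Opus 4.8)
The plan is to recognize that Corollary \ref{C:distorted} is nothing more than the contrapositive of Theorem \ref{T:rotation}, so almost all of the work has already been done. First I would fix notation along the lines of Remark \ref{R:set}: let $\Sigma_g \subseteq X$ denote the set of points $x$ at which the cocycle $\F G_x$ is bounded on the cyclic subgroup $\langle g\rangle$, so that the local rotation number $r_x(g)\in\B R/\B Z$ is well-defined for every $x\in\Sigma_g$ by the discussion in Section~\ref{SS:bounded}. The assertion to be proved is then: if $g$ is distorted in $\Homeo(X,\F a)$, the function $x\mapsto r_x(g)$ is constant on $\Sigma_g$.

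Next I would argue by contraposition. Suppose $x\mapsto r_x(g)$ is \emph{not} constant on $\Sigma_g$; then there are points $x,y\in\Sigma_g$ with $r_x(g)\neq r_y(g)$ in $\B R/\B Z$. Since $\F G_x$ and $\F G_y$ are bounded on $\langle g\rangle$ by the definition of $\Sigma_g$, and since the local rotation numbers of $g$ at $x$ and $y$ are distinct, the hypotheses of Theorem~\ref{T:rotation} are met verbatim, and that theorem yields that $g$ is undistorted in $\Homeo(X,\F a)$. Contrapositively, if $g$ is distorted then no such pair $x,y$ exists, i.e.\ $r_x(g)$ takes the same value at every point of $\Sigma_g$, which is exactly the claim.

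The only issues requiring attention are bookkeeping rather than mathematics: (i) "distorted" is to be read as the negation of "undistorted" in the sense defined in Section~\ref{S:intro} (there is some finitely generated subgroup of $\Homeo(X,\F a)$ in which $g$ has vanishing translation length), so that the statement is literally the logical negation of the conclusion of Theorem~\ref{T:rotation}; and (ii) the standing hypotheses of this section, namely that $X$ is compact and $\F a\in H^1(X;\B Z)$, are in force, which is precisely what Theorem~\ref{T:rotation} requires. I do not expect any genuine obstacle here: the substance — constructing the quasimorphism $\F q$ on $\langle g\rangle$ out of $\F K_\alpha$, showing it is unbounded when the rotation numbers differ, and deducing undistortedness via Theorem~\ref{T:q-m} — has already been carried out in Sections~\ref{SS:q-m} and~\ref{SS:proof_rotation}, and this corollary merely restates that conclusion in contrapositive form.
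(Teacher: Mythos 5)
Your proposal is correct and matches the paper's intent exactly: the corollary is stated with an immediate \qed because it is precisely the contrapositive of Theorem~\ref{T:rotation}, which is the argument you give. No further comment is needed.
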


\begin{example}\label{R:gradient}
If $X$ is a closed oriented surface of a positive genus then 
$g\in \Homeo(X)_0$ has a fixed point and hence if it is distorted then
the local rotation number of $g$ has to vanish.
\hfill
$\diamondsuit$
\end{example}

\begin{example}
If $g$ is a time-one map of a gradient flow then $\F G_x$
is bounded at every $x$ and its local rotation number is equal to zero.
We do not know whether such elements are distorted or not.
\hfill
$\diamondsuit$
\end{example}

Let $G\subset \Homeo(X)$ be a group of homeomorphisms
acting trivially on $H^1(X;\B R)$.
Let $g$ be a homeomorphism distorted in $G$.
Let $\ell_1,\ell_2\subset X$ be oriented simple closed curves
preserved by $g$. We also assume that the classes $[\ell_i]$ are
nonzero in $H^1(X;\B R)$. Let $\rho_1$ and $\rho_2$ be the
topological rotation numbers associated with the action
of $g$ on $\ell_1$ and $\ell_2$ respectively.

\begin{proposition}\label{P:top_rot}
With the above notation and assumptions
the following statements are true.
\begin{enumerate}
\item
There exist nonzero integers $k_1,k_2\in \B Z$ such that
$k_1\rho_1 = k_2\rho_2$ in $\B Q/\B Z$.
\item
If the classes $[\ell_1]$ and $[\ell_2]$
are linearly independent in $H^1(X;\B R)$ then
both $\rho_1$ and $\rho_2$ are rational.
\end{enumerate}
\end{proposition}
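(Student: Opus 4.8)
The plan is to derive both statements from Corollary~\ref{C:distorted} by letting the cohomology class vary. Since $H^1(X;\B Z)\cong\Hom(H_1(X;\B Z),\B Z)$ is torsion free it embeds into $H^1(X;\B R)$, so the hypothesis that $G$ acts trivially on $H^1(X;\B R)$ forces $g^*=\OP{id}$ on $H^1(X;\B Z)$; hence $g\in\Homeo(X,\F a)$ and $G\subseteq\Homeo(X,\F a)$ for every $\F a\in H^1(X;\B Z)$. As distortion in a subgroup implies distortion in the ambient group, $g$ is distorted in each $\Homeo(X,\F a)$, and therefore, by Corollary~\ref{C:distorted}, its local rotation number is one and the same element of $\B R/\B Z$ at all points at which the cocycle $\F G_x$ built from $\F a$ is bounded. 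Thus the proposition will follow once we establish the following local computation: for every $\F a\in H^1(X;\B Z)$ and every $x_i\in\ell_i$, the cocycle $\F G_{x_i}$ is bounded on $\langle g\rangle$ and the local rotation number of $g$ at $x_i$ equals $d_i\rho_i\pmod{\B Z}$, where $d_i:=\langle\F a,[\ell_i]\rangle\in\B Z$.

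This local computation is the heart of the argument. I would work with the continuous representative $\alpha$ provided by Lemma~\ref{L:continuous}. Fixing $x_i\in\ell_i$ and taking all the reference paths inside the path-connected, $g$-invariant circle $\ell_i$, the cocycle $\F G_{x_i,\alpha}$ restricted to $\langle g\rangle$ pairs $\alpha$ only with chains supported in $\ell_i$, so it coincides with the cocycle $\F G$ of the circle homeomorphism $g|_{\ell_i}$ associated with the class $i_{\ell_i}^*\F a\in H^1(\ell_i;\B Z)\cong\B Z$, which is $d_i$ times a generator. Lifting $g|_{\ell_i}$ to a homeomorphism $\tilde g$ of $\B R$ and combining the description of $\alpha$ over $\ell_i$ coming from the proof of Lemma~\ref{L:continuous} with the classical estimate $|\tilde g^{\,n}(t)-t-n\rho_i|<1$, one gets $\F K_\alpha(g^n)|_{\ell_i}=n\,d_i\rho_i+O(1)$ uniformly in $n$ and in the base point; by Lemma~\ref{L:G-K} this bounds $\F G_{x_i}$ on $\langle g\rangle$. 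Moreover, the remark in Section~\ref{SS:bounded} (identifying $\F G$ on the circle with the bounded Euler cocycle) together with Example~\ref{E:boundedZ} identifies the local rotation number of $g$ at $x_i$ with $d_i$ times the classical rotation number of $g|_{\ell_i}$, that is, with $d_i\rho_i\pmod{\B Z}$.

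Granting this, part~(1) is straightforward. Since $[\ell_i]\neq0$ in $H_1(X;\B R)$ there are classes $\F a_1,\F a_2\in H^1(X;\B Z)$ with $\langle\F a_1,[\ell_1]\rangle\neq0$ and $\langle\F a_2,[\ell_2]\rangle\neq0$. The two maps $(m_1,m_2)\mapsto\langle m_1\F a_1+m_2\F a_2,[\ell_i]\rangle$ for $i=1,2$ are nonzero linear forms on $\B Z^2$ (the coefficient of $m_i$ in the $i$-th form is nonzero), so their zero sets cannot together exhaust $\B Z^2$, a group not being a union of two proper subgroups. Choosing $(m_1,m_2)$ outside both zero sets and putting $\F a:=m_1\F a_1+m_2\F a_2$ we obtain $d_i:=\langle\F a,[\ell_i]\rangle\neq0$ for $i=1,2$; applying the local computation and Corollary~\ref{C:distorted} to this $\F a$ gives $d_1\rho_1\equiv d_2\rho_2\pmod{\B Z}$, which is the asserted relation with $k_i=d_i$.

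For part~(2), assume $[\ell_1]$ and $[\ell_2]$ are linearly independent in $H_1(X;\B R)$. Then some rational functional on $H_1(X;\B Q)$ vanishes on $[\ell_2]$ but not on $[\ell_1]$, and clearing denominators yields $\F a\in H^1(X;\B Z)$ with $d:=\langle\F a,[\ell_1]\rangle\neq0$ and $\langle\F a,[\ell_2]\rangle=0$. By the local computation and Corollary~\ref{C:distorted} the local rotation number of $g$ is $d\rho_1$ at points of $\ell_1$ and $0$ at points of $\ell_2$, hence $d\rho_1\in\B Z$ and $\rho_1\in\tfrac{1}{d}\B Z\subseteq\B Q$; symmetrically $\rho_2\in\B Q$. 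The step I expect to be the main obstacle is the local computation of the second paragraph --- making precise that $\F G$ restricted to an invariant circle is, up to a bounded coboundary, $d_i=\langle\F a,[\ell_i]\rangle$ times the Euler cocycle of the induced circle homeomorphism, together with the attendant boundedness on $\langle g\rangle$; once this is in hand, the rest is the elementary linear algebra above and a direct appeal to Corollary~\ref{C:distorted}.
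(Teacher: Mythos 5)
Your proof is correct and follows essentially the same route as the paper: the relation $r_{x_i}(g)=\langle\F a,[\ell_i]\rangle\,\rho_i$ between local and topological rotation numbers, combined with the constancy of the local rotation number for distorted elements (Corollary \ref{C:distorted}), applied to suitably chosen integral classes. The paper merely asserts that relation and the existence of the required classes, whereas you supply the verifications; the substance is the same.
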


\begin{proof}
Let $\alpha$ be an integral singular one-cocycle and let $x_i\in \ell_i$.
We get that the topological and the local rotation numbers are
related as follows
$$
\rho_i\cdot \int_{\ell_i}\alpha = r_{x_i}(g).
$$
Since $g$ is distorted, it follows from Theorem \ref{T:rotation}
that the local rotation numbers $r_{x_i}(g)$ are equal.
Choosing $\alpha$ such that
$\int_{\ell_i}\alpha \neq 0$ proves the first statement.

To prove the second assertion, we choose $\alpha$ such that
$\int_{\ell_1}\alpha = 0 \neq \int_{\ell_2}\alpha$. It follows
that $\rho_2\cdot \int_{\ell_2}\alpha=0$ and, since
$\int_{\ell_2}\alpha$ is an integer, it implies that
$\rho_2 \in \B Q/\B Z$. The rationality of $\rho_1$ is
proven similarly.
\end{proof}

\section{Vanishing properties of the cocycle $\F G$}\label{S:vanish}
In this section we prove theorems about the vanishing of
the cohomology class represented by the cocycle $\F G$.

\subsection{Proof of Theorem \ref{T:subset}}\label{SS:proof_subset}
(Cf. proof of Theorem 1.3 in \cite{GK2}.)
Recall that $i_L\colon L\to M$ is the inclusion of a path-connected
subset such that $i_L^*\F a =0$.
Therefore, for any $\alpha$ in the cohomology class $\F a$ there exists
a map $F\colon L\to\B A$ such that
$\int_\gamma\alpha=F(\gamma(1))-F(\gamma(0))$
provided $\gamma$ is a path contained in $L$.

Let us choose a reference point $x$ in $L$.
Since the homeomorphism  $h\in G$ preserves $L$ and $L$ 
is path-connected there exists a path in $L$ from $x$ to $hx$.
Therefore
$$
\F G(g,h)=F(ghx)-F(gx)-\left(F(hx)-F(x)\right).
$$
If we define $\F b(g):=F(x)-F(gx)$ then we get that
$$
\F G(g,h)=\F b(g)-\F b(gh)-\F b(h)=\delta\F b(g,h),
$$
and hence the cocycle $\F G$ is cohomologically trivial.
\qed

\begin{example}
The class $\F G$, which is equal to 
the Euler class of $\Homeo(\B S^1)_0$, is nontrivial on 
$\PSl(2,\B R)$ and restricts to a nontrivial class on any 
cocompact lattice
$\Gamma\subset \PSl(2,\B R)$ \cite[Section 6.2]{MR1876932}.

On the other hand,
every orbit of $\Gamma$ is countable thus simply-connected.
This shows that the assumption on the connectivity
of an invariant subset is essential in Theorem \ref{T:subset}.
\hfill $\diamondsuit$
\end{example}

\subsection{Proof of Theorem \ref{T:measure}}\label{SS:proof_measure}
Recall that we need to prove that if an action of a group
$G$ preserves a Borel probability measure then the corresponding
class $\psi^*[\F G]$ is trivial.

Let us choose, by Lemma \ref{L:continuous}, a representative $\alpha$
of $\F a\in H^1(X,\B R)$ such that $\F K_\alpha(h)$ is continuous
for each homeomorphism $h$ of $X$.
Let $\mu$ be a Borel probability measure.
We define the lift $\widetilde{\F K}_\alpha(h)$ by the following
normalisation condition,
$$
\int_X\widetilde{\F K}_\alpha(h)\mu=0.
$$
This can be done because $\F K_\alpha(h)$, being continuous,
is integrable.

Since $G$ preserves a measure $\mu$ we see that
$$
\widetilde{\F K}_\alpha(g)\circ h-\widetilde{\F K}_\alpha(gh)
+\widetilde{\F K}_\alpha(h)=0.
$$
Indeed, the left hand side is a constant and integrating it
with respect to $\mu$ we get that this constant
is zero. Thus $\widetilde {\F K}_{\alpha}$ is a one-cocycle
with values in $C^0(X;\B R)$ lifting the cocycle $\F K_{\alpha}$.

Finally, by Lemma \ref{L:G-K} we have that
$$
\F G(g,h)
=-\widetilde{\F K}_\alpha(g)(x)+\widetilde{\F K}_\alpha(g)(hx)
=-\widetilde{\F K}_\alpha(g)(x)+\widetilde{\F K}_\alpha(gh)(x)
-\widetilde{\F K}_\alpha(h)(x).
$$
This implies that  $\F G=\delta\F b$ for a cochain 
defined by
$\F b(g):=-\widetilde{\F K}_\alpha(g)(x)$.
\qed

\section{Cohomology class defined by $\F G$}\label{S:class}
Recall that we have defined the cocycle $\F G$ by an explicit
formula in Section \ref{SS:explicit} and also the cocycle
$\F K_{\alpha}$ in Section \ref{SS:one-cocycle}. The purpose of this
section is to give several characterisations of the cohomology
class represented by $\F G$. This will be used in the
next section to prove the nonvanishing results for
the cohomology class $[\F G]$.

Throughout this section let $G:=\Homeo(X,\F a)$ denote the group 
of homeomorphisms of $X$ preserving the cohomology 
class $\F a\in H^1(X;\B A)$.

\subsection{The extension class}
Let $\B A\to X_{\F a}\stackrel{p}\to X$ be the covering associated
with the cohomology class ${\F a}\in H^1(X;\B A)$. 
Let $G_\F a$ be the group of homeomorphisms of $X_\F a$
commuting with the deck transformations and projecting
onto $G$.
There is a central extension
$$
0\to\B A\to G_\F a\to G\to 0.
$$
Let $\F E_\F a \in H^2(BG^{\delta},\B A)$ be the corresponding
extension class.

\subsection{Transgression}
Consider the following universal fibration
$$
X\to X_{G^{\delta}}:=
X\times_{G^{\delta}}EG^{\delta}\to BG^{\delta}
$$
associated with the natural action of $G^{\delta}$ on $X$.
Then the differential
$$
d_2\colon E_2^{0,1}=H^1(X,\B A)^G\to H^2(BG^\delta,\B A)=E_2^{2,0}
$$
in the Leray-Serre spectral sequence defines
a cohomology class $d_2[\F a]\in H^2(BG^{\delta};\B A)$.

\begin{theorem}\label{T:def}
The following equalities hold in\/ $H^2(BG^\delta,\B A)$
$$
\delta[\F K_\alpha]=[\F G]=\F E_\F a=d_2\F a.
$$
\end{theorem}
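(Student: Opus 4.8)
The plan is to establish the chain of equalities
$\delta[\F K_\alpha]=[\F G]=\F E_\F a=d_2\F a$
by treating each equality separately, moving from the most concrete to the most abstract.

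\textbf{Step 1: $\delta[\F K_\alpha]=[\F G]$.} This should be essentially formal from the definitions. The cocycle $\F K_\alpha\colon G\to C^0(X;\B A)/\B A$ is a one-cocycle for the $G$-module $C^0(X;\B A)/\B A$, and $\F G$ takes values in $\B A$. The natural map relating them is evaluation at the reference point combined with the short exact coefficient sequence $0\to \B A\to C^0(X;\B A)\to C^0(X;\B A)/\B A\to 0$, where $\B A$ sits inside as the constants. Lemma \ref{L:G-K} already tells us $\F G_{x,\alpha}(g,h)=\F K_\alpha(g)(hx)-\F K_\alpha(g)(x)$; I would observe that choosing any set-theoretic lift $\widetilde{\F K}_\alpha(g)\in C^0(X;\B A)$ of $\F K_\alpha(g)$, the failure of $\widetilde{\F K}_\alpha$ to be a cocycle is exactly a constant function, namely the value of $\F G(g,h)$ at the reference point (after evaluating at $x$), so $\delta\widetilde{\F K}_\alpha = \F G$ in the bar complex with $\B A$-coefficients. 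This is precisely the statement that the connecting homomorphism in group cohomology sends $[\F K_\alpha]\in H^1(G;C^0(X;\B A)/\B A)$ to $[\F G]\in H^2(BG^\delta;\B A)$. I should be a little careful to phrase "$\delta[\F K_\alpha]$" correctly as this connecting map.

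\textbf{Step 2: $[\F G]=\F E_\F a$.} Here I would write down the central extension $0\to\B A\to G_\F a\to G\to 0$ explicitly via a section and compute its cocycle. Fix a lift $\tilde x\in p^{-1}(x)$ in $X_\F a$. For each $g\in G$, choose the lift $\tilde g\in G_\F a$ and record the "discrepancy" $a(g)\in \B A$ determined by comparing $\tilde g\tilde x$ with a canonical choice; concretely, using the equivariant map $\B a\colon X_\F a\to\B A$ from Section \ref{SS:singular} (which satisfies $\B a(\tilde\gamma(1))=\int_\gamma\alpha$), one sets things up so that the extension cocycle is $(g,h)\mapsto \B a(\tilde g\tilde h\tilde x)-\B a(\tilde g\widetilde{hx}) - \B a(\tilde h \tilde x)$ or a similar combination. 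Unwinding using the defining property of $\B a$ and the fact that $\tilde g$ commutes with deck transformations, this should reduce exactly to $\int_\gamma g^*\alpha - \alpha$ for a path $\gamma$ from $x$ to $hx$, i.e.\ to $\F G_{x,\alpha}(g,h)$. This is the step requiring the most bookkeeping, but Lemma \ref{L:singular} gives all the needed identities.

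\textbf{Step 3: $\F E_\F a=d_2\F a$.} This is a standard fact: the transgression $d_2\colon E_2^{0,1}=H^1(X;\B A)^G\to E_2^{2,0}=H^2(BG^\delta;\B A)$ in the Leray--Serre (equivalently Lyndon--Hochschild--Serre, since $G$ is discrete here) spectral sequence of the fibration $X\to X_{G^\delta}\to BG^\delta$ sends a $G$-invariant class $\F a\in H^1(X;\B A)$ to the obstruction to lifting it to $H^1(X_{G^\delta};\B A)$, and this obstruction is classically identified with the extension class of the central extension obtained by pulling back, over $G$, the deck-transformation action on the associated $\B A$-cover — which is exactly $G_\F a\to G$. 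I would cite this as the well-known description of transgression of a degree-one class (e.g.\ via the five-term exact sequence $0\to H^1(BG^\delta;\B A)\to H^1(X_{G^\delta};\B A)\to H^1(X;\B A)^G\xrightarrow{d_2} H^2(BG^\delta;\B A)$), and note that the covering $X_\F a\to X$ with its deck action is precisely what the transgression mechanism produces. The main obstacle overall is keeping the various sign and direction conventions consistent across the three descriptions (connecting map, extension cocycle, transgression), since each of these is only defined up to sign by many textbooks; I would fix one convention at the outset — say, the bar-complex connecting homomorphism — and verify the other two against it on the level of explicit cocycles using the reference point $x$ and the map $\B a$.
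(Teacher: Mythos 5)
Your proposal is correct and follows essentially the same route as the paper: the same three-way decomposition, with the first equality via the connecting homomorphism of $0\to\B A\to C^0(X;\B A)\to C^0(X;\B A)/\B A\to 0$ using a lift normalised at the basepoint, and the second via the normalised section $\B a(\tilde f\tilde x)=0$ of $G_\F a\to G$ together with the path-lifting identity $\B a(\tilde f\tilde\gamma(1))=\int_{f\gamma}\alpha$ and Brown's formula for the extension cocycle. The only divergence is in the third equality, where you cite the classical identification of the transgression of a degree-one class with the extension class; the paper instead proves this (Proposition \ref{P:ext}) by representing $\F a$ as a map $X\to B\B A$, building a morphism from the Borel fibration to $B\B A\to BG^\delta_{\F a}\to BG^\delta$ via a section of the contractible-fibre bundle $E\B A\to E_{\F a}\to E$, and invoking functoriality of the spectral sequence --- a small amount of genuine work that your citation defers but does not misstate.
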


\subsection{Proof of the first equality: $\delta[\F K_{\alpha}]=[\F G]$}
Consider the following extension of $G$-representations
$$
0\to\B A \to C^0(X;\B A)\to C^0(X;\B A)/\B A\to 0.
$$
It induces the connecting homomorphism
$$
\delta \colon H^1\left(BG^\delta;C^0(X;\B A)/\B A\right)
\to H^2\left(BG^\delta;\B A\right)
$$
and hence we obtain the class
$\delta[\F K_{\alpha}]\in H^2(BG^\delta;\B A)$.

Take a section $C^0(X,\B A)/\B A\to C^0(X,\B A)$ that chooses
a function vanishing at the basepoint $x$. Denote by
$\widetilde{\F K}_{\alpha}$ the lift of the cocycle to $C^0(X,\B A)$
using this section. The function
$$
(g,h)\mapsto\widetilde{\F K}_{\alpha}(g)\circ h -
\widetilde{\F K}_{\alpha}(gh) + \widetilde{\F K}_{\alpha}(h)
$$
is constant and equal to $(\partial\widetilde{\F K}_{\alpha})(g,h)$, the
value of a cocycle representing the class $\partial [\F K_{\alpha}]$.
Evaluating it at the basepoint we get that
$$
\partial\widetilde{\F K}_{\alpha}(g,h) = \widetilde{\F K}_{\alpha}(g)(hx)
=\int_\gamma g^*\alpha - \alpha = \F G_x(g,h),
$$
where $\gamma$ is any path between $x$ and $hx$.
\qed

\subsection{Proof of the second equality: $[\F G]=\F E_{\F a}$}
Let $\B a\colon X_{\F a}\to \B A$ be an equivariant map
representing the cohomology class $\F a$ as in Lemma \ref{L:singular}.
Fix a reference point $\tilde x\in p^{-1}(x)\subset X_\F a$ with
$\B a\left(\tilde x\right)=0$.
Consider the extension
$\B A\to G_{\F a}\to G$ and let
$\widetilde{\cdot}\colon G \to G_{\F a}$
be a section defined by
\begin{equation}\label{Eq:section}
\B a\left(\tilde f\tilde x\right)=0.
\end{equation}
Since $\tilde f$ commutes with the action of $\B A$, it follows
that if $\tilde y_1$ and $\tilde y_2$ are two points in the
same fibre of $X_\F a\to X$ we have
\begin{equation}\label{Eq:fibre}
\B a\left(\tilde f\tilde y_1\right)-
\B a\left(\tilde y_1\right)=
\B a\left(\tilde f\tilde y_2\right)-
\B a\left(\tilde y_2\right).
\end{equation}
Let $\gamma\colon [0,1]\to X$ be a curve from $x$ to $gx$.
Let $\tilde\gamma\colon [0,1]\to X_\F a$ be a lift with
$\tilde\gamma(0)=\tilde x$.
Let $\tilde y_1=\tilde g\tilde x$
and $\tilde y_2=\tilde\gamma(1)$.
Then we obtain the following equalities
\begin{equation}\label{Eq:34}
\B a\left(\tilde f\tilde g\tilde x\right)=
\B a\left(\tilde f\tilde g\tilde x\right)-
\B a\left(\tilde g\tilde x\right)=
\B a\left(\tilde f\tilde\gamma(1)\right)-
\B a\left(\tilde\gamma(1)\right).
\end{equation}
where the first equality follows from (\ref{Eq:section}) and the
second one from (\ref{Eq:fibre}).

Since $\tilde f\tilde\gamma$ is a lift of $f\gamma$ starting at
$\tilde f\tilde\gamma(0)=\tilde f\tilde x$, we have that
$$
\B a\left(\tilde f\tilde\gamma(1)\right)=\int_{f\gamma}\alpha,
$$
where $\alpha$ and $\B a$ are related as in
Lemma \ref{L:singular}.
This together with (\ref{Eq:34}) implies that
\begin{equation}\label{Eq:35}
\B a\left(\tilde f\tilde g\tilde x\right)=
\int_{f\gamma}\alpha-\int_\gamma\alpha=\F G(f,g).
\end{equation}

The extension class is represented by the cocycle defined
by the following identity \cite[Section IV.3]{MR83k:20002}:
\begin{equation}\label{Eq:brown}
\F E(f,g) + \widetilde{fg}=\tilde f \tilde g.
\end{equation}

In the following calculation the second equality
follows from the equivariance of $\B a$ and the
others as marked.
\begin{align*}
&&\F E(f,g) &=\F E(f,g)+\B a\left(\widetilde{fg}\tilde x\right)&&\rlap{by (\ref{Eq:section})}\\
&&          &=\B a\left(\F E(f,g)+\widetilde{fg}\tilde x\right)&&\\
&&          &=\B a\left(\tilde f\tilde g\tilde x\right)&&\rlap{by (\ref{Eq:brown})}\\
&&          &=\F G(f,g)&&\rlap{by (\ref{Eq:35})}
\end{align*}

\subsection{Proof of the third equality: $\F E_{\F a}=d_2[\F a]$}
The Lyndon-Hochschild-Serre spectral sequence associated to
the extension $\B A\to G_{\F a}\to G$ is isomorphic to the
Leray-Serre spectral sequence associated with the
fibration
$$
B\B A \to B{G}^\delta_{\F a}\to BG^\delta.
$$
We shall make all the computations in the latter.
We have isomorphisms
$$
H^0(BG^\delta;H^1(B\B A;\B A))
=H^1(B\B A;\B A)^G
=\Hom(\B A;\B A)^G
$$
and, using this identifications, the extension class is defined to be
$$
\F E_{\F a}:=d_2[\OP{id}]
$$
where
$d_2\colon H^0(BG^\delta;H^1(B\B A;\B A))\to
H^2(BG^\delta;H^0(B\B A;\B A))=
H^2(BG^\delta;\B A)$ is the differential
in the spectral sequence.


\begin{proposition}\label{P:ext}
Let $X\to E:=EG^\delta\times_{G^\delta} X
\to BG^\delta$ be the universal bundle associated
with the action of\/ $G^\delta$ on $X$.
Then the extension class
$$
\F E_{\F a}=d_2({\F a})
$$
where $d_2\colon H^1(X;\B A)^{G}\to H^2(BG^\delta;\B A)$ is
the differential in the associated spectral sequence.
\end{proposition}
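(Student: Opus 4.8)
The plan is to compute the transgression $d_2(\F a)$ directly at the level of cochains. Since $BG^\delta$ is a $K(G,1)$, the cohomology of the Borel construction $E=EG^\delta\times_{G^\delta}X$ is computed by the bicomplex $C^{p,q}:=C^p(G;C^q(X;\B A))$ of group cochains with values in singular cochains on $X$, with vertical differential the singular coboundary $\delta$ and horizontal differential the group coboundary $\partial$ for the natural action of $G$ on $C^*(X;\B A)$. Filtering by $p$ produces a spectral sequence with $E_1^{p,q}=C^p(G;H^q(X;\B A))$ and $E_2^{p,q}=H^p(G;H^q(X;\B A))$, converging to $H^{p+q}(E;\B A)$, which coincides with the Leray--Serre spectral sequence of the fibration $X\to E\to BG^\delta$; in particular the differential $d_2\colon E_2^{0,1}=H^1(X;\B A)^G\to E_2^{2,0}=H^2(BG^\delta;\B A)$ is exactly the one in the statement.

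Next I would run the transgression zig-zag on $\F a$. Choose a singular one-cocycle $\alpha\in Z^1(X;\B A)$ representing $\F a$; regarded as an element of $C^{0,1}$ it is $\delta$-closed, and the key observation is that its horizontal coboundary $g\mapsto g^*\alpha-\alpha$ is $\delta$-exact by the very definition of $\F K_\alpha$ in Section \ref{SS:one-cocycle}: $g^*\alpha-\alpha=\delta(\F K_\alpha(g))$. Thus, picking the lift $\widetilde{\F K}_\alpha(g)\in C^0(X;\B A)$ of $\F K_\alpha(g)$ that vanishes at the reference point $x$, the cochain $\widetilde{\F K}_\alpha\in C^1(G;C^0(X;\B A))$ is the intermediate term of the zig-zag and $d_2(\F a)$ is represented by $\partial\widetilde{\F K}_\alpha\in C^2(G;C^0(X;\B A))$. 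Since $\delta(\partial\widetilde{\F K}_\alpha)=\pm\partial(\delta\widetilde{\F K}_\alpha)=\pm\partial(\partial\alpha)=0$ and $X$ is path-connected, $\partial\widetilde{\F K}_\alpha$ takes values in the constants $H^0(X;\B A)=\B A$, and evaluating it at $x$ reproduces exactly the computation in the proof of the first equality of Theorem \ref{T:def}: $(\partial\widetilde{\F K}_\alpha)(g,h)=\widetilde{\F K}_\alpha(g)(hx)=\F G_{x,\alpha}(g,h)$ (cf.\ Lemma \ref{L:G-K}). Hence $d_2(\F a)=\delta[\F K_\alpha]=[\F G]$ in $H^2(BG^\delta;\B A)$.

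To finish, the second equality of Theorem \ref{T:def}, already proven, gives $[\F G]=\F E_{\F a}$, so $d_2(\F a)=\F E_{\F a}$ as asserted. Alternatively, one can avoid quoting Theorem \ref{T:def} altogether and identify $\partial\widetilde{\F K}_\alpha$ with the extension cocycle $\F E$ on the nose, using the section $\widetilde{\cdot}\colon G\to G_{\F a}$ normalised by $\B a(\tilde f\tilde x)=0$ and recovering formula (\ref{Eq:35}). The only delicate point --- and the main obstacle --- is the clean identification of the bicomplex spectral sequence with the honest Leray--Serre spectral sequence of $X\to E\to BG^\delta$ and the matching of conventions (the $G$-action on $C^*(X;\B A)$, the signs in the zig-zag) with the proof of the first equality of Theorem \ref{T:def}, so that $\partial\widetilde{\F K}_\alpha$ comes out as $+\F G$ and not $-\F G$; the underlying topological fact --- that the transgression of a degree-one fibre class is computed by the connecting homomorphism of $0\to\B A\to C^0(X;\B A)\to C^0(X;\B A)/\B A\to 0$ applied to $[\F K_\alpha]$ --- is classical.
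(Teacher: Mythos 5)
Your argument is correct, but it takes a genuinely different route from the paper's. You compute the transgression $d_2(\F a)$ directly in the cochain bicomplex $C^p(G;C^q(X;\B A))$ via the zig-zag $\alpha\mapsto\widetilde{\F K}_\alpha\mapsto\partial\widetilde{\F K}_\alpha$, identify the resulting cocycle with $\F G$, and then conclude by quoting the already-established equality $[\F G]=\F E_{\F a}$. The paper instead proves $\F E_{\F a}=d_2(\F a)$ without touching cochains: it represents $\F a$ by a continuous map $\alpha\colon X\to B\B A=K(\B A,1)$, so that $\F a=\alpha^*[\OP{id}]$, builds a morphism from the Borel fibration $X\to E\to BG^\delta$ to the fibration $B\B A\to BG^\delta_{\F a}\to BG^\delta$ (using a section of $E\B A\to E_{\F a}\to E$, which exists because the fibre is contractible), and invokes naturality of the Leray--Serre spectral sequence to transport the defining relation $\F E_{\F a}=d_2[\OP{id}]$ to $d_2(\F a)=\F E_{\F a}$. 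The paper's argument is convention-free and logically independent of the other two equalities of Theorem \ref{T:def}; yours makes the full chain $d_2(\F a)=\delta[\F K_\alpha]=[\F G]=\F E_{\F a}$ explicit and computable, but it hinges on the two points you honestly flag: the identification of the bicomplex (Cartan--Leray) spectral sequence with the Leray--Serre spectral sequence of the Borel construction, and the matching of sign and action conventions so that the zig-zag yields $+\F G$ rather than $-\F G$ (an issue that matters here since the claimed equality is on the nose, not up to sign). Both points are standard and can be settled, and your reliance on the second equality of Theorem \ref{T:def} is not circular, since that equality is established before Proposition \ref{P:ext}; still, the paper's functorial argument avoids these bookkeeping issues entirely, which is presumably why it was chosen.
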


\begin{proof}
Since there is an isomorphism $H^1(X;\B A)=[X,B\B A]$,
the class $\F a$ can be represented
by a continuous map $\alpha\colon X\to B\B A=K(\B A,1)$.
Thus $\F a=\alpha^*[\OP{id}]$.
Let
$$
E_{\F a}:=EG_{\F a}^\delta\times_{G_{\F a}^\delta} X_{\F a}
$$
and let us consider the following diagram of fibrations.
$$
\xymatrix
{
\B A\ar[d]\ar[r] &X_{\F a}\ar[d]\ar[r] &X \ar[d]\ar[r]^\alpha &B \B A\ar[d]\\
E\B A\ar[d]\ar[r] &E_{\F a}\ar[d]\ar[r] &E \ar[d]\ar[r]^{\F A} &BG^\delta_{\F a}\ar[d]\\
B\B A\ar[r] &BG_{\F a}^\delta\ar[r] &BG^\delta \ar@{=}[r] &BG^\delta\\
}
$$
Since the fibration $E\B A\to E_{\F a}\to E$ has a contractible
fibre, it admits a section. The map $\F A\colon E\to BG^\delta_{\F a}$
is defined as the composition of this section followed by the
projection. In this way the two right-hand side columns form
a morphism of bundles. Hence the result follow from the
functoriality of the spectral sequence.
\end{proof}

\section{Non-triviality of the cocycle $\F G$}\label{S:nonvanish}
\subsection{Proof of Theorem \ref{T:ev}}\label{SS:proof_ev}
The idea of the proof is that the cohomology class
$\psi^*[\F G]$ is equal to the transgression of $\F a$ in
the universal bundle $X\to X_{G^{\delta}}\to BG^{\delta}$
according to Theorem \ref{T:def}. This implies
that it is the image of the corresponding transgression
for the universal bundle $X\to X_G\to BG$ for the
action of the connected group $G$. The transgression
for a connected group is then related with the
topology of the corresponding evaluation map.

More precisely, the identity homomorphism 
$G^\delta\to G$ induces a continuous map
$\beta\colon BG^\delta\to BG$. Consider the
following morphism of universal bundles.
$$
\xymatrix
{
X\ar@{=}[r]\ar[d] & X\ar[d]\\
X_{G^{\delta}}\ar[r]\ar[d] & X_G\ar[d]\\
BG^\delta\ar[r]^{\beta}& BG\\
}
$$
It follows from Theorem \ref{T:def} 
that $\psi^*[\F G]=\beta^*(d_2({\F a}))$.

Let $\OP{ev}\colon G\to X$ denote the evaluation at the reference
point $x\in X$ associated with the action. We have yet another
morphism of universal bundles.
$$
\xymatrix
{
G\ar[r]^{\OP{ev}}\ar[d] & X\ar[d]\\
EG\ar[r]\ar[d] & X_G\ar[d]\\
BG\ar@{=}[r]& BG\\
}
$$
The evaluation map induces the morphism of
spectral sequences that on the second page
is the map
$$
\C E^{p,q}\colon H^p(BG;H^q(X;\B A))\to H^p(BG;H^q(G;\B A))
$$
defined by $\OP{ev}^*$ on the coefficients level.
It follows from the connectivity of $G$ that $\C E^{p,0}$ is
injective. In both spectral sequences we denote the differential
on the second page by~$d_2$. We have the
following straightforward equalities.
$$
d_2(\OP{ev}^*({\F a})) = d_2(\C E^{0,1}({\F a})) =
\C E^{2,0}(d_2({\F a}))
$$
Since $EG$ is contractible the corresponding differential
$$
d_2\colon H^0(BG;H^1(G;\B A))\to H^2(BG;H^0(G;\B A))
$$
is an isomorphism. This, together with the injectivity
of $\C E^{p,0}$, implies that
$\OP{ev}^*({\F a})\neq 0$ if and only if
$d_2({\F a})\neq 0$ which finishes the proof of
Theorem~\ref{T:ev}.
\qed

\subsection{Remark on Corollary \ref{C:homeo}}
The result of McDuff in \cite{MR569248} states
that the comparison map
$\beta\colon B\Homeo(X)^\delta\to B\Homeo(X)$
is a homology equivalence for the full group of
homeo\-morphisms. It is however known that that
the homotopy fibre of $\beta$ is determined
by the topological and algebraic structure of
a neighbourhood of the identity. This implies
that $\beta$ is a homology equivalence for
any group of homeomorphisms containing the
connected component of the identity as a subgroup.
In particular it holds for $\Homeo(X,{\F a})$.

\subsection{Proof of Corollary \ref{C:perfect}}\label{SS:perfect}
Let $\widetilde G\to G$ be the universal cover.
It follows from the countability of the fundamental
group $\pi_1(G)$ that $\widetilde{G}$ is perfect.
Indeed, let
$\OP{Ab}\colon \widetilde{G} \to H:=
\widetilde G/\left[\widetilde G,\widetilde G\right]$
be the abelianisation. It induces a surjective map
$G\to H/\OP{Ab}(\pi_1(G))$ which is trivial
because $G$ is perfect. This implies that
$H=\OP{Ab}(\pi_1(G))$ and since $H$ is path
connected it must be trivial.

Now we follow the proof of Lemma 6 in Milnor \cite{MR699007}.
Let $\C F$ denote the homotopy fibre of the comparison
map $\beta\colon BG^\delta\to BG$. Since it depends
only on the local structure of the group it is also
the homotopy fibre of the corresponding comparison
map for the universal cover. It follows from
the perfectness of $\widetilde G$ and the spectral
sequence for the fibration
$\C F\to B\widetilde G^{\delta}\to B\widetilde G$
that $H^1(\C F;\B Z)=0$.
This  implies that $H^1(\C F;\B A)=0$ by the universal
coefficients theorem.
It then follows from the spectral
sequence for the fibration
$\C F \to BG^\delta\to BG$ that the homomorphism
$\beta^*\colon H^2(BG;\B A)\to H^2(BG^\delta;\B A)$
is injective.
\qed

\subsection*{Acknowledgements}
We thank Alessandra Iozzi, Assaf Libman and Aleksy Tralle for helpful 
comments and discussions.

\'S.R. Gal is partially supported by Polish MNiSW grant N N201 541738
and Swiss NSF Sinergia Grant CRSI22-130435.

\bibliography{../../bib/bibliography}
\bibliographystyle{acm}

\end{document}